\numberwithin{equation}{section}
\theoremstyle{plain}
\newtheorem{thm}{Theorem}[section]
\newtheorem{rem}{Remark}[section]
\newtheorem{lem}{Lemma}[section]
\newcommand{\dE}{\mathbb{E}}
\newcommand{\dR}{\mathbb{R}}
\newcommand{\cA}{\mathcal{A}}
\newcommand{\cN}{\mathcal{N}}
\newcommand{\rI}{\mathrm{I}}
\newcommand{\cF}{\mathcal{F}}
\newcommand{\cM}{\mathcal{M}}
\newcommand{\veps}{\varepsilon}
\newcommand{\wh}{\widehat}
\newcommand{\skn}{\sum\limits_{k=1}^n}
\newcommand{\fn}{\widehat{f}_n(x)}
\newcommand{\fcn}{\widecheck{f}_n(x)}
\newcommand{\ftn}{\widetilde{f}_n(x)}
\newcommand{\fcnp}{\widecheck{f}_n^\prime(x)}
\newcommand{\ftnp}{\widetilde{f}_n^\prime(x)}
\newcommand{\gn}{\widehat{g}_n(x)}
\newcommand{\hn}{\widehat{h}_n(x)}
\newcommand{\fnp}{\widehat{f}_n^{\,\prime}(x)}
\newcommand{\gnp}{\widehat{g}_n^{\,\prime}(x)}
\newcommand{\gnc}{\widehat{g}_n^{\,2}(x)}
\newcommand{\hnp}{\widehat{h}_n'(x)}
\newcommand{\KXk}{K\left( \dfrac{x-X_k}{h_k}\right)}
\newcommand{\KXn}{K\left(\dfrac{x-X_n}{h_n}\right)}
\newcommand{\KpXk}{K^\prime\left(\dfrac{x-X_k}{h_k}\right)}
\newcommand{\KpXn}{K^\prime\left(\dfrac{x-X_n}{h_n}\right)}
\newcommand{\ind}{\mbox{1}\kern-.25em \mbox{I}}
\font\calcal=cmsy10 scaled\magstep1
\def\build#1_#2^#3{\mathrel{\mathop{\kern 0pt#1}\limits_{#2}^{#3}}}
\def\liml{\build{\longrightarrow}_{}^{{\mbox{\calcal D}}}}
\def\videbox{\mathbin{\vbox{\hrule\hbox{\vrule height1.4ex \kern.6em\vrule height1.4ex}\hrule}}}
\def\demend{\hfill $\videbox$\\}
\email{Bernard.Bercu@u-bordeaux.fr}
\email{Sami.Capderou@u-bordeaux.fr}
\email{Gilles.Durrieu@univ-ubs.fr}
\keywords{Application and case studies; mathematical statistics; nonparametric methods; simulation; smoothing and nonparametric regression}
\begin{document}
\title[Estimation of the derivative of the regression function]
{Nonparametric estimation of the derivative of the regression function: application to sea shores water quality 
 \vspace{1ex}}
 
\author{Bernard Bercu}
\address{Universit\'e de Bordeaux, Institut de Math\'ematiques de Bordeaux,
UMR CNRS 5251, 351 Cours de la Lib\'eration, 33405 Talence, France.}
\author{Sami Capderou}
\address{Universit\'e de Bordeaux, Institut de Math\'ematiques de Bordeaux,
UMR CNRS 5251, 351 Cours de la Lib\'eration, 33405 Talence, France.
}
\author{Gilles Durrieu}
\address{Universit\'e de Bretagne Sud, Laboratoire de Math\'ematiques de Bretagne Atlantique,
UMR CNRS 6205, Campus de Tohannic, 56017 Vannes, France.}
\thanks{}
 
 
\begin{abstract}
This paper is devoted to the nonparametric estimation of the derivative of the regression function
in a nonparametric regression model. We implement a very efficient and easy to handle
statistical procedure based on the derivative of the recursive Nadaraya-Watson estimator. 
We establish the almost sure convergence as well as the asymptotic normality for our estimates. 
We also illustrate our nonparametric estimation procedure on simulated and real life data 
associated with sea shores water quality and valvometry.
\end{abstract}

\maketitle


\section{Introduction}

Environmental and water protection should be tackled as a top priority of our society.
It is forecasted that in $2035$, nearly $60$\% of the world's population will live within $65$ miles of the sea front (\cite{haslett2001coastal}). 
Water quality monitoring is therefore fundamental especially on the coastline. On the one hand,  marine pollution comes 
mostly from land based sources. On the other hand, this pollution can lead to the collapse of coastal ecosystems and cause 
public health issues. In this context, there is a critical need to develop a real-time reliable field assay to monitor the water quality 
within a decision making process.  Among them, bioindicators are more and more commonly used.
Endemic species are the most suitable bioindicators for the assessment of the quality of the coastal environment.
For exemple,  oysters, a well-known filter-feeding mollusc, feature a relevant sentinel organism to evaluate water quality.
These animals being sedentary, they can witness the water quality evolution in a specific location.

The interest in investigating the bivalve’s activities by recording the valve movements has been explored for water quality surveillance. 
This area of interest is known as valvometry.
The basic idea of valvometry is to use the bivalve’s ability to close its shell when exposed to a contaminant as an alarm signal
(e.g.  \citealt{doherty1987valve}; \citealt{nagai2006detecting}; \citealt{MS11}). 
Thus, recording the shell gaping activity of oysters is an effective method to study their behavior when facing water pollution 
(e.g. \citealt{Riisgard2006}; \citealt{Garcia2008}). 
Nowadays, valvometric techniques produce high-frequency data, enabling online and {\it in situ} studies of the behavior of bivalve molluscs. 
They allow autonomous long-term recordings of valve movements without interfering their normal behavior.
The goal of this paper is to propose a nonparametric statistical procedure based on the estimation of the derivative of the
regression function in order to evaluate the velocity of the valve opening/closing activity. 
\vspace{1ex} \\
A wide range of literature is available on nonparametric estimation of a regression function. 
We refer the reader to  \citealt{NAD89},  \citealt{tsybakov2009introduction} and \citealt{devroye2012combinatorial}
for some excellent books on density and regression function estimation. 
Here, we shall focus our attention on the Nadaraya-Watson estimator of the regression function (\citealt{nadaraya1964estimating} and \citealt{watson1964smooth}). 
The almost sure convergence of this estimator was established by \citealt{noda1976estimation}, while its asymptotic normality  was proven by \citealt{schuster1972joint}. 
Later, \citealt{choi2000data} proposed three data-sharpening versions of the Nadaraya-Watson estimator in order to reduce the asymptotic variance in the central limit theorem.
\vspace{1ex} \\
In this paper, we investigate an alternative approach, based on three recursive versions of the Nadaraya-Watson estimator
(see \citealt{ahmad1976nonparametric}; \citealt{bercu2012robbins}; \citealt{devroye19801}; \citealt{johnston1982probabilities}; \citealt{wand1995kernel};
\citealt{duflo1997random}).
These recursive versions allows us to update the estimate with new collected information during the monitoring process. 
Consequently, it is possible to avoid the need to recompute a new final estimate from the whole data set. 
To the best of our knowledge, no references are available on the derivative of the recursive Nadaraya-Watson estimator. 
Our first goal is to study the asymptotic behavior of the derivative of those three estimators.
Our second goal is to illustrate our nonparametric estimation procedure on high-frequency valvometry data, 
in order to detect irregularities or abnormal behaviors of bivalves.
\vspace{1ex} \\
The paper is organized as follows. Section \ref{sec:NED} deals with our
nonparametric estimation procedure of the derivative of the regression function. We establish in Section \ref{sec:TR} the pointwise almost sure convergence as well as the asymptotic normality of our estimators and we compare their asymptotic variances. Section \ref{sec:SD} is devoted to simulation results to study the performance of our recursive procedure. Section \ref{sec:HFVD} presents an application for the survey of aquatic system using high-frequency valvometry. All the proofs of the nonparametric theoretical results are postponed to Appendices \ref{AppendixA} and \ref{AppendixB}.  

\section{Nonparametric estimation of the derivative}
\label{sec:NED}

The  relationship  between  the  distance  of two electrodes $(Y_{n})$ and the time of the measurement $(X_n)$ can be seen as 
a  nonparametric regression model given, for all $n\geq 1$, by
\begin{equation}
\label{MODEL}
Y_{n}=f(X_{n})+\veps_n
\end{equation}
where $(\veps_n)$ are unknown random errors. In all the sequel, we assume that $(X_n)$ is a sequence of independent and identically distributed
random variables with positive probability density function $g$.
Our purpose is to estimate the derivative of the unknown regression function $f$ which is directly associated with the velocity of the valve opening/closing activities of the oysters. For example, in an inhospitable environment, oysters behavior will be altered. Consequently, detecting changes of the closing and opening speed can provide insights about the health of oysters and so can be used as bioindicators of the water quality.
\vspace{2ex}\\
\noindent
We recall that the Nadaraya-Watson estimator of the link function $f$ is defined as 
\begin{equation}
\hat{f}^{NW}_n(x) = \dfrac{\skn Y_k K\left(\dfrac{x-X_k}{h_n}\right)}{\skn K\left(\dfrac{x-X_k}{h_n}\right) },
\label{DEFNW}
\end{equation}
where the kernel $K$ is a chosen probability density function and  
the bandwidth $(h_n)$ is a sequence of positive real numbers  
decreasing to zero. In our situation, we focus our attention on the recursive version of the
Nadaraya-Watson estimator (\citealt{duflo1997random}) of $f$ given, for any $x\in\dR$, by 
\begin{equation}
\fn = \dfrac{\skn \dfrac{Y_k}{h_k} \KXk}{\skn \dfrac{1}{h_k}\KXk}. 
\label{DEFFHAT}
\end{equation}
The denominator should, of course, be taken positive.
It coincides with the recursive version of the Parzen-Rosenblatt estimator
(\citealt{parzen1962estimation}; \citealt{rosenblatt1956remarks}) of the probability density function $g$. 
For any $x\in\dR$, denote
\begin{equation}
\hn = \dfrac{1}{n}\ \skn \dfrac{Y_k}{h_k}\KXk \hspace{0.8cm}\text{and}\hspace{0.8cm}
\gn = \dfrac{1}{n}\ \skn \dfrac{1}{h_k}\KXk,
\label{DEFHGHAT}
\end{equation} 
which can be recursively calculated as
\begin{equation}
\hn = \dfrac{n-1}{n}\ \hat h_{n-1}(x)+\dfrac{Y_n}{n h_n}\ K\left(\dfrac{x-X_n}{h_n}\right)
\label{DEFHGHAT_2}
\end{equation} 
and
\begin{equation}
\gn = \dfrac{n-1}{n}\ \hat g_{n-1}(x)+\dfrac{1}{n h_n}K\left(\dfrac{x-X_n}{h_n}\right).
\label{DEFHGHAT_3}
\end{equation} 
This modification allows dynamic updating of the estimates.
\vspace{2ex} \\
\noindent
In the special case where $g$ is known, a simplified version of the Nadaraya-Watson estimator
of $f$, introduced by \citealt{johnston1982probabilities}, is given by
\begin{equation}
\ftn = \dfrac{\hn}{g(x)}.
\label{DEFFT}
\end{equation}
In the same vein, an alternative estimator of $f$ when $g$ is known, was proposed by \citealt{wand1995kernel}. 
It is defined, for any $x\in\dR$, by
\begin{equation}
\fcn=\dfrac{1}{n}\skn \dfrac{Y_k}{g(X_k)h_k}\KXk.
\label{DEFFC}
\end{equation}
\\
\noindent
The derivatives of $\fn$, $\ftn$, and $\fcn$ are given, for any $x\in \dR$ such that $g(x)>0$, by
\begin{equation}
\fnp=\dfrac{\hnp}{\gn}-\dfrac{\hn \gnp}{\gnc}, 
\label{NWD}
\end{equation}
\begin{equation}
\ftnp=\dfrac{\hnp}{g(x)}-\dfrac{\hn g'(x)}{g^2(x)},
\label{JD}
\end{equation}
\begin{equation}
\fcnp=\dfrac{1}{n}\skn \dfrac{Y_k}{g(X_k)h_k^2}\KpXk.
\label{WJD}
\end{equation}

\section{Theoretical results}
\label{sec:TR}
In order to investigate the asymptotic behavior of these derivative estimates, it is necessary to introduce several
classical assumptions.
\vspace{1ex}
\begin{displaymath}
\begin{array}{ll}
(\mathcal{A}_1) & \textrm{The kernel $K$ is a positive symmetric bounded function, differentiable with}\\ 
  & \textrm{bounded derivative, satisfying}
\end{array}
\vspace{-1ex}
\end{displaymath} 
$$
\int_{\dR} K(x) dx =1, \hspace{0.4cm}\int_{\dR} K^\prime(x)dx=0,\hspace{0.4cm}
\int_{\dR} xK^\prime(x) dx =-1, \hspace{0.4cm} \int_{\dR} x^2 K^\prime(x) dx=0,
$$
$$
\int_{\dR} x^4 K(x) dx< \infty, \hspace{1cm}\int_{\dR} x^4 |K^\prime(x)| dx < \infty.
$$
\begin{displaymath}
\begin{array}{ll}
(\mathcal{A}_2) & \textrm{The regression function $f$ and the density function $g$ are bounded continuous,}\\
  & \textrm{twice differentiable with bounded derivatives.} \\
(\mathcal{A}_3) & \textrm{The noise sequence $(\veps_n)$ and the observation times $(X_n)$ are independent. }\\
  & \textrm{Moreover, $(\veps_n)$ is a sequence of independent, squared integrable, identically} \\
  & \textrm{distributed random variables such that $\dE[\veps_n]=0$ and $\dE[\veps_n^2]=\sigma^2$.}
 \end{array}
\end{displaymath}
Furthermore, the bandwidth $(h_n)$ is a sequence of positive real numbers, 
decreasing to zero, such that $n h_n$ tends to infinity. For the sake of simplicity, we shall make use of
$h_n = 1/n^{\alpha}$ with $0<\alpha <1$.
\vspace{2ex}\\
\noindent
Our first result on the almost sure convergence of our estimates is as follows.

\begin{thm}
\label{T-ASCVG}
Assume that $(\mathcal{A}_1)$, $(\mathcal{A}_2)$ and $(\mathcal{A}_3)$ hold. Then, if $0< \alpha <1/3$,
we have for any $x \in \dR$ such that $g(x)>0$, 
\begin{equation}
\label{ASCVGNWD}
\lim_{n\rightarrow \infty} \fnp = f^\prime(x) \hspace{1cm}\text{a.s.}
\end{equation}
\begin{equation}
\label{ASCVGJD}
\lim_{n\rightarrow \infty} \ftnp = f^\prime(x)
\hspace{1cm}\text{a.s.}
\end{equation}
\begin{equation}
\label{ASCVGWJD}
\lim_{n\rightarrow \infty} \fcnp = f^\prime(x)
\hspace{1cm}\text{a.s.}
\end{equation}
\end{thm}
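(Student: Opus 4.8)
The plan is to decompose each derivative estimator into a bias part (the expectation, which should converge to $f'(x)$) and a fluctuation part (the centered sum, which should converge to zero almost surely), and handle the three estimators by leveraging the common building blocks $\hn$, $\gn$, $\hnp$ and $\gnp$. Concretely, I would first establish the almost sure convergence of these four recursive quantities: $\gn \to g(x)$, $\hn \to f(x)g(x)$, $\gnp \to g'(x)$ and $\hnp \to (fg)'(x)$. Each of these is a normalized sum $\frac1n\sum_{k=1}^n Z_k$ of independent (but not identically distributed, since $h_k$ varies) terms; the strategy is the classical one — compute the mean via a change of variables $u=(x-t)/h_k$ and a Taylor expansion of $f$, $g$ using $(\mathcal{A}_1)$–$(\mathcal{A}_2)$ (the moment conditions on $K$ and $K'$ are exactly what make the first-order terms vanish and produce the right limit), then control the variance. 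For the derivative pieces the relevant term is of order $(1/(h_k^2))K'((x-X_k)/h_k)$, whose second moment is $O(1/h_k^3) = O(k^{3\alpha})$; so $\sum_k \mathrm{Var}(Z_k)/k^2 = O(\sum_k k^{3\alpha-2})$, which converges precisely when $3\alpha - 2 < -1$, i.e. $\alpha < 1/3$ — this is where the hypothesis $0<\alpha<1/3$ enters. A Kronecker-lemma argument (or a direct strong law for independent, non-identically distributed, $L^2$ summands à la the Kolmogorov criterion) then gives $\frac1n\sum_{k=1}^n (Z_k - \dE Z_k) \to 0$ a.s., and combined with the convergence of the Cesàro means of $\dE Z_k$ this yields the four limits.

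Granting these four limits, the three conclusions follow by elementary algebra. For $\fcnp$ in \eqref{WJD}, it is already of the form $\frac1n\sum_k W_k$ with $W_k = (Y_k/(g(X_k)h_k^2))K'((x-X_k)/h_k)$; writing $Y_k = f(X_k)+\veps_k$, the $f$-part has mean converging to $f'(x)$ (again by change of variables and the moment identities $\int xK'=-1$, $\int K' = \int x^2K'=0$ killing the unwanted terms), while the $\veps$-part is a mean-zero sum whose variance is controlled by $(\mathcal{A}_3)$ and $\sigma^2$, with the same $k^{3\alpha-2}$ summability; so \eqref{ASCVGWJD} is essentially a special case of the machinery above. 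For $\ftnp$ in \eqref{JD}, we just divide the already-established limits $\hnp\to(fg)'(x)$ and $\hn\to f(x)g(x)$ by the deterministic $g(x)>0$ and $g^2(x)$, and check that $(fg)'(x)/g(x) - f(x)g'(x)/g^2(x) = f'(x)$. For $\fnp$ in \eqref{NWD}, the same limits plus $\gn\to g(x)\neq 0$ and $\gnp\to g'(x)$ give, by the continuity of $(a,b,c,d)\mapsto a/c - bd/c^2$ at $c=g(x)>0$, exactly $f'(x)$; here one should note that $\gn>0$ eventually a.s. so the ratios are well defined.

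The main obstacle is the careful variance/bias analysis of the derivative terms — in particular getting the exact exponent $3\alpha - 2$ and thus pinning down the threshold $\alpha<1/3$, and making sure the Cesàro averages of the biases $\dE Z_k$ actually converge (not merely stay bounded), which requires the Taylor remainders to be uniformly controlled using the bounded second derivatives from $(\mathcal{A}_2)$ and the finite fourth moments $\int x^4 K$, $\int x^4|K'|$ from $(\mathcal{A}_1)$. A secondary but nontrivial point is that the summands are independent yet not identically distributed because the scaling $h_k$ changes with $k$; the cleanest route is to apply a strong law for independent $L^2$ variables via $\sum_k \mathrm{Var}(Z_k)/k^2 < \infty$ and Kronecker's lemma, rather than to invoke any i.i.d. SLLN. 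Once this is set up for $\hnp$ and $\gnp$ (and the already-standard $\hn$, $\gn$), the rest is routine. All the detailed computations are deferred to Appendix \ref{AppendixA}.
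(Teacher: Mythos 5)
Your proposal is correct and follows essentially the same route as the paper: the same decomposition of $\hnp$, $\gnp$ (and of $\fcnp$ treated directly as an average) into a Taylor-expanded bias term converging to the right limit and a centered fluctuation whose variance is of order $k^{3\alpha}$, with the threshold $\alpha<1/3$ arising exactly as in the paper, followed by the same algebraic identities to pass from the four limits to $\fnp$ and $\ftnp$. The only cosmetic difference is that the paper controls the centered sums through the strong law of large numbers for martingales (predictable quadratic variation of order $n^{1+3\alpha}$), whereas you invoke the Kolmogorov variance criterion together with Kronecker's lemma for independent, non-identically distributed $L^2$ summands; since the summands are indeed independent here, the two arguments are interchangeable.
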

\begin{proof}
The proof is given in Appendix \ref{AppendixA}.
\end{proof}
 \ \vspace{-2ex} \\ 
Our second result is devoted to the asymptotic normality of our estimates. Denote
\begin{equation}
\label{DefIntKp2}
\xi^2=\int_{\dR} \bigl(K^\prime(x)\bigr)^2 dx.
\end{equation}

\begin{thm}
\label{T-CLT}
Assume that $(\mathcal{A}_1)$, $(\mathcal{A}_2)$ and $(\mathcal{A}_3)$ hold and that the 
sequence $(\veps_{n})$ has a finite moment of order $>2$. Then, as soon as $1/5<\alpha<1/3$, 
we have for any $x \in \dR$ such that $g(x)>0$, the pointwise asymptotic normality
\begin{equation}
\label{CLTNWD}
\sqrt{nh_n^3}\bigl(\fnp-f^\prime(x)\bigr) \liml \cN \left (0,\dfrac{\xi^2}{(1+3\alpha)g(x)}\ \sigma^2\right ),
\end{equation}
\begin{equation}
\label{CLTJD}
\sqrt{nh_n^3}\bigl(\ftnp-f^\prime(x)\bigr) \liml \cN \left (0,\dfrac{\xi^2}{(1+3\alpha)g(x)}\bigl(f^2(x)+\sigma^2\bigr)\right ),
\end{equation}
\begin{equation}
\label{CLTWJD}
\sqrt{nh_n^3}\bigl(\fcnp-f^\prime(x)\bigr) \liml \cN \left(0,\dfrac{\xi^2}{(1+3\alpha)g(x)}\bigl(f^2(x)+\sigma^2\bigr)\right ).
\end{equation}
\end{thm}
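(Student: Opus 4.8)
The plan is to derive the three statements from a single scheme. In each case one writes the centred, normalised estimator as a sum $\frac1n\sum_{k=1}^{n}Z_k$ of random variables that are independent (by $(\mathcal{A}_3)$) but not identically distributed, plus a bias term, and applies a Lyapunov central limit theorem for triangular arrays once the bias has been shown negligible at the rate $\sqrt{nh_n^{3}}$. For $\fcnp$ and $\ftnp$, which are linear in the $Y_k$, the summands are
\[
Z_k=\frac{Y_k}{g(X_k)h_k^{2}}\,\KpXk
\qquad\text{and}\qquad
Z_k=Y_k\Bigl(\frac{1}{g(x)h_k^{2}}\,\KpXk-\frac{g'(x)}{g^{2}(x)h_k}\,\KXk\Bigr),
\]
respectively; for $\fnp$ the relevant linear statistic will turn out to be $\hnp-f(x)\gnp=\frac1n\sum_k R_k$ with $R_k=h_k^{-2}\bigl(f(X_k)-f(x)+\veps_k\bigr)\KpXk$. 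Writing $Y_k=f(X_k)+\veps_k$, I would compute $\dE[Z_k]$ using the change of variable $u=(x-X_k)/h_k$ and a second order Taylor expansion of $f$, resp. of $fg$; the identities $\int_\dR K'=0$, $\int_\dR uK'(u)\dd u=-1$, $\int_\dR u^{2}K'(u)\dd u=0$ from $(\mathcal{A}_1)$ together with the bounded second derivatives from $(\mathcal{A}_2)$ give $\dE[Z_k]=f'(x)+o(h_k)$, so the global bias is $\frac1n\sum_k o(h_k)=o(h_n)$ and $\sqrt{nh_n^{3}}\,o(h_n)=o\bigl(n^{(1-5\alpha)/2}\bigr)\to 0$, which is exactly where $\alpha>1/5$ enters.

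For the variance, the same change of variable applied to $\dE[Z_k^{2}]$, with $\dE[\veps_k^{2}]=\sigma^{2}$ and the continuity of $f$ and $g$, gives $\mathrm{Var}(Z_k)=\xi^{2}\bigl(f^{2}(x)+\sigma^{2}\bigr)\big/\bigl(g(x)h_k^{3}\bigr)+O(1)$ for $\fcnp$ and $\ftnp$, and $\mathrm{Var}(R_k)=\sigma^{2}\xi^{2}\big/\bigl(g(x)h_k^{3}\bigr)+o(h_k^{-3})$ for $R_k$ --- the design part there contributing only through $\bigl(f(X_k)-f(x)\bigr)^{2}\to 0$, which is why the variance of $\fnp$ will feature $\sigma^{2}$ rather than $f^{2}(x)+\sigma^{2}$. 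Since $\sum_{k=1}^{n}k^{3\alpha}\sim n^{1+3\alpha}/(1+3\alpha)$, a Toeplitz-lemma summation then yields $nh_n^{3}\,\mathrm{Var}\bigl(\frac1n\sum_k Z_k\bigr)\to\xi^{2}\bigl(f^{2}(x)+\sigma^{2}\bigr)\big/\bigl((1+3\alpha)g(x)\bigr)$, resp. $\to\sigma^{2}\xi^{2}/(1+3\alpha)$. For the Lyapunov condition, pick $\delta>0$ with $\dE|\veps_k|^{2+\delta}<\infty$; since $K'$ is bounded with $\int_\dR|u|^{4}|K'(u)|\dd u<\infty$, one has $\int_\dR|K'(u)|^{2+\delta}\dd u<\infty$, hence $\dE|Z_k-\dE Z_k|^{2+\delta}=O(h_k^{-3-2\delta})$ and $\sum_{k=1}^{n}\dE\bigl|n^{-1}\sqrt{nh_n^{3}}\,(Z_k-\dE Z_k)\bigr|^{2+\delta}=O\bigl(n^{\delta(\alpha-1)/2}\bigr)\to 0$ because $\alpha<1$. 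This settles \eqref{CLTWJD} and \eqref{CLTJD}.

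For $\fnp$ in \eqref{NWD} I would not linearise directly --- replacing $\gn$ by $g(x)$ and $\gnp$ by $g'(x)$ gives the wrong variance. Instead, using $\hn=\fn\gn$ so that $\fnp=(\hnp-\fn\gnp)/\gn$, and splitting $\hnp-\fn\gnp=\bigl(\hnp-f(x)\gnp\bigr)-\bigl(\fn-f(x)\bigr)\gnp$, I obtain the exact decomposition
\[
\fnp-f'(x)=\frac{1}{\gn}\Bigl\{\bigl(\hnp-f(x)\gnp\bigr)-f'(x)g(x)-\gnp\bigl(\fn-f(x)\bigr)-f'(x)\bigl(\gn-g(x)\bigr)\Bigr\}.
\]
By Theorem \ref{T-ASCVG} and the intermediate almost sure limits from its proof, $\gn\to g(x)$, $\gnp\to g'(x)$ and $\fn\to f(x)$ almost surely; moreover $\gn-g(x)$ and $\fn-f(x)$ are $O_{\dP}\bigl((nh_n)^{-1/2}\bigr)+O(h_n^{2})=o_{\dP}\bigl((nh_n^{3})^{-1/2}\bigr)$ since $h_n\to 0$ and $\alpha>1/7$, so $\sqrt{nh_n^{3}}$ times each of the last two terms in the braces tends to $0$ in probability. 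By Slutsky's lemma it then suffices to prove $\sqrt{nh_n^{3}}\bigl(\tfrac1n\sum_k R_k-f'(x)g(x)\bigr)\liml\cN\bigl(0,\sigma^{2}\xi^{2}g(x)/(1+3\alpha)\bigr)$, which is the case already handled: $\dE[R_k]=f'(x)g(x)+o(h_k)$ (bias negligible for $\alpha>1/5$), the limiting variance is $\sigma^{2}\xi^{2}g(x)/(1+3\alpha)$, and the Lyapunov bound is unchanged since $f$ is bounded. Dividing by $\gn\to g(x)$ converts this variance into $\sigma^{2}\xi^{2}\big/\bigl((1+3\alpha)g(x)\bigr)$, which is \eqref{CLTNWD}.

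The hard part will be this last reduction for $\fnp$: one must resist linearising \eqref{NWD} naively and instead see that the fluctuations of $\gn$ and $\hn$, being only of order $(nh_n)^{-1/2}$, are negligible at the slower rate $\sqrt{nh_n^{3}}$ and decouple from those of the derivative estimates $\hnp$ and $\gnp$, so that the correct leading linear statistic is $\hnp-f(x)\gnp$, in which the design term survives solely through $f(X_k)-f(x)$. The rest --- the change-of-variable asymptotics for the bias and variance, the summation $\sum_k k^{3\alpha}\sim n^{1+3\alpha}/(1+3\alpha)$ via Toeplitz's lemma, and the Lyapunov bound --- is routine bookkeeping, and the window $1/5<\alpha<1/3$ is precisely what forces the bias to be $o\bigl(\sqrt{nh_n^{5}}\bigr)\to 0$ while keeping the almost sure limits of Theorem \ref{T-ASCVG} available.
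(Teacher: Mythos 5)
Your proposal is correct, and for \eqref{CLTNWD} it follows a genuinely different route from the paper. The paper proves all three limits with the central limit theorem for vector martingales: a two-dimensional martingale for $\fcnp$, a four-dimensional one for $\ftnp$, and for $\fnp$ an exact decomposition into a six-dimensional martingale $(\cM_n(x))$ paired with a random weight vector $\widehat e_n(x)$ that converges almost surely; there the cancellation of $f^2(x)$ only appears at the very end, through the quadratic form $\langle e(x),\Gamma(x)e(x)\rangle=\sigma^2\xi^2/((1+3\alpha)g(x))$, driven by the cross-covariance $f(x)$ between the martingales coming from $\hnp$ and $\gnp$. You instead exploit the genuine independence of the pairs $(X_k,\veps_k)$ and use a Lyapunov CLT for independent, non-identically distributed summands; for $\fnp$ you isolate the single linear statistic $\hnp-f(x)\gnp=\frac1n\sum_k R_k$ with $R_k=h_k^{-2}\bigl(f(X_k)-f(x)+\veps_k\bigr)K'\bigl((x-X_k)/h_k\bigr)$, so that the design contribution enters only through $f(X_k)-f(x)=O(h_k)$ and the disappearance of $f^2(x)$ from the variance is visible summand by summand, the terms in $\gn-g(x)$ and $\fn-f(x)$ being discarded because their fluctuations are $O_{\dP}\bigl((nh_n)^{-1/2}\bigr)+O(h_n^2)=o_{\dP}\bigl((nh_n^{3})^{-1/2}\bigr)$. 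Your approach buys a more transparent explanation of why $\fnp$ has the smaller variance and avoids the six-dimensional covariance bookkeeping; the paper's martingale formulation buys a uniform treatment of all three estimators with one tool and reuses the quadratic-variation limits already computed for the almost sure convergence. Your bias, variance and Lyapunov computations match the paper's (the exponent $\delta(\alpha-1)/2$ is exactly the paper's $c=a(\alpha-1)$), up to two harmless slips: the variance remainder is $O(h_k^{-2})$ rather than $O(1)$, and the rates for $\gn-g(x)$ and $\fn-f(x)$ are asserted rather than derived, though they follow from the same second-moment computations (the paper instead neutralizes these terms by showing the corresponding entries of $\langle\cM(x)\rangle_n/n^{1+3\alpha}$ vanish).
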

\begin{proof}
The proof is given in Appendix \ref{AppendixB}.
\end{proof}

\begin{rem}
One can realize that the derivate of the Nadaraya-Watson estimator $\fnp$ is more efficient that $\ftnp$ and $\fcnp$
as its asymptotic variance is the smallest one. The more $f(x)$ is far away from $0$, the more one should make use of  $\fnp$.
\end{rem}

\section{Simulated data}
\label{sec:SD}
This section is devoted to numerical experiments in order to evaluate the performances of our derivative estimates. The data are generated from the nonparametric regression model
\begin{equation}
\label{modele}
Y_n=f(X_n)+\varepsilon_n,
\end{equation}
where the regression function $f$ is defined, for all $x$ in $[0,1]$, by 
\begin{equation}
\label{functionf}
f(x)=\sin(2\pi x^3)^3.
\end{equation}
The random observation $(X_n)$ is a sequence of independent random variables uniformly distributed over the interval $[0,1]$, and 
the source of variation $(\varepsilon_n)$ is a sequence of  independent and identically random variables sharing
the same $\mathcal N(0,1)$ distribution. We implement our statistical procedure with sample size $n=10\,000$. 
The simulated data associated with \eqref{modele} are given in Figure \ref{illustration}. 
\begin{figure}[htp]
\vspace{-3ex}
\begin{center}
\includegraphics[width=100mm,height=60mm]{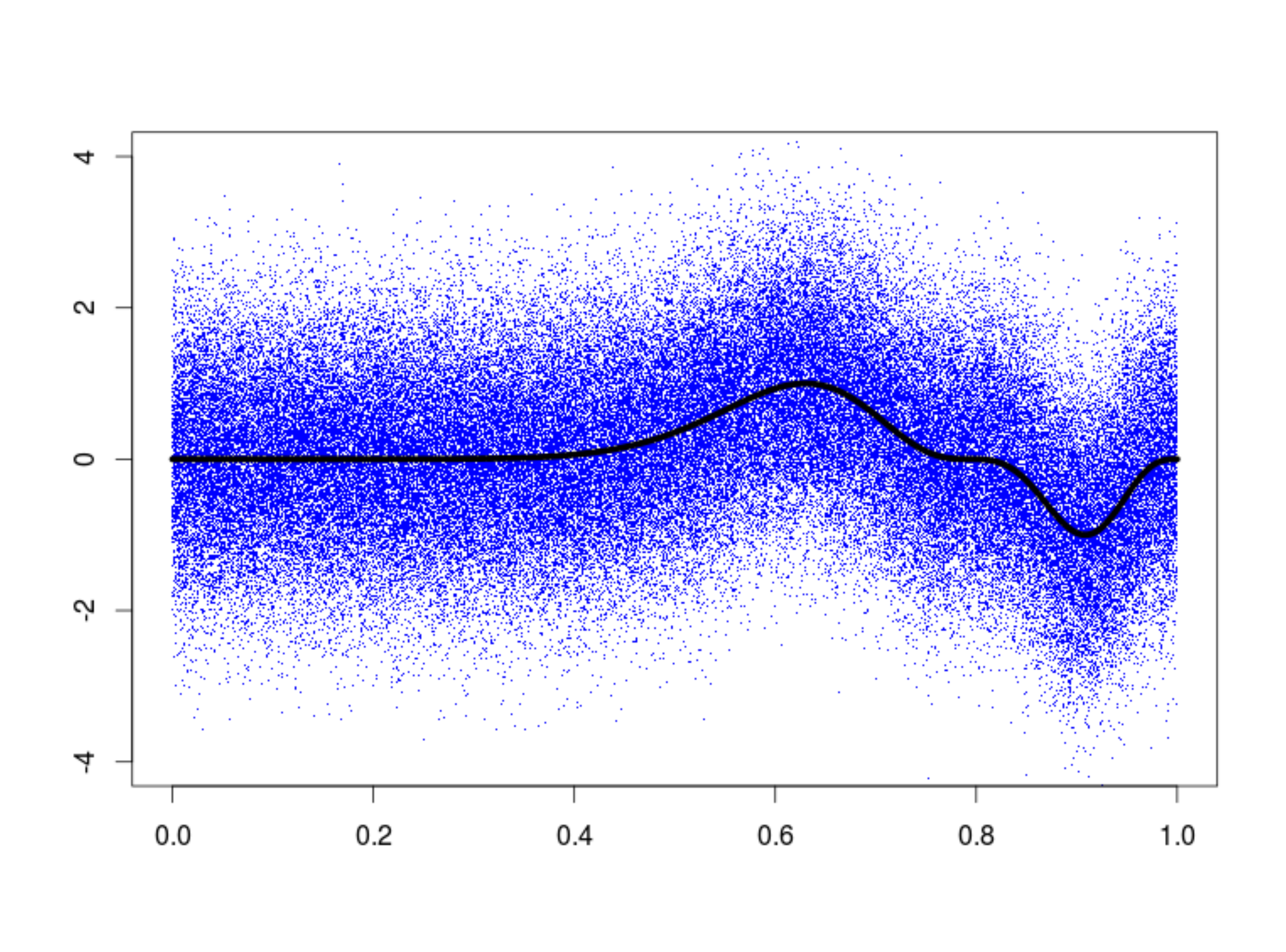} 
\vspace{-4ex}
\caption{Simulated data $(X_n,Y_n)$ with the regression function $f$ (solid line) given in (\ref{functionf}) with
sample size $n=10\, 000$.}
\label{illustration}
\end{center}
\end{figure}
\\
We first illustrate the pointwise almost sure convergence of the estimator $\widehat{f}^{\prime}_{n}$ to $f^\prime$ 
for the Gaussian and Epanechnikov kernels, respectively given by
$$
K(x)= \dfrac{1}{\sqrt{2\pi}} \exp \Bigl (-\frac{x^2}{2}\Bigr ) \qquad \mbox{and} \qquad K(x)=\dfrac{3}{4}\bigl(1-x^2\bigr)\rI_{|x|\leq 1}.
$$
The first kernel is supported on the whole real line, while the second one has a compact support.
The derivative $f^\prime$ is given, for all $x$ in $[0,1]$, by
\begin{equation}
\label{TrueDeriv}
f^\prime(x)=18 \pi \, x^2 \cos(2\pi x^3)\sin(2\pi x^3)^2.
\end{equation}
It is well-known that in practice, the choice of the kernel is not really significant, compared to the crucial choice of the bandwidth
$h_n=1/n^\alpha$. Figure \ref{Fig_compar_alpha} shows that one should take the value of $\alpha$ close to $0.3$.
\begin{figure}[tph]
\vspace{-3ex}
\begin{center}
\includegraphics[width=100mm,height=60mm]{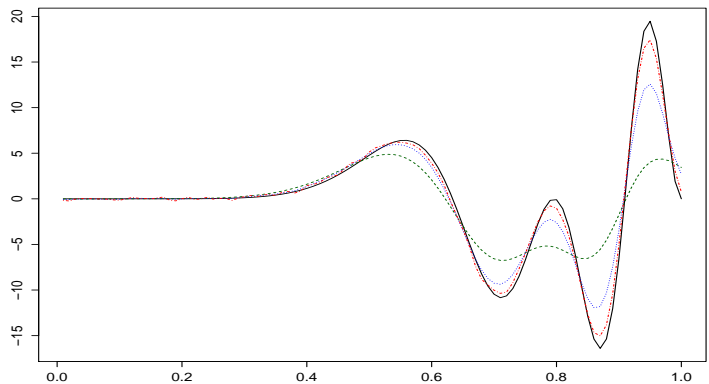}
\end{center}
\vspace{-3ex}
\caption{Representation of the estimator $\widehat{f}^{\prime}_{n}(x)$ of $f^{\prime}(x)$ given in (\ref{NWD}) with a Gaussian kernel for $\alpha=0.2$ (dashed line), $\alpha=0.25$ (dotted line) and $\alpha=0.3$ (dash-dotted line). The solid line is the underlying derivative $f^{\prime}(x)$.} 
\label{Fig_compar_alpha}
\end{figure}
In order to select an automatic choice of $\alpha$, we use the cross validation method by taking the value $\alpha$ 
that minimizes the cross validation function 
\begin{equation}
\label{CV}
CV(\alpha)=\dfrac{1}{n}\sum\limits_{k=1}^n \Bigl(\widehat{f}^\prime_{(-k)}(X_k,\alpha)  - f^\prime(X_k)\Bigr)^2
\end{equation}
where $\widehat{f}^\prime_{(-k)}(X_k,\alpha)$ is the estimator of $f^\prime(X_k)$ defined by (\ref{NWD}) with the couple $(X_k,Y_k)$ removed. Figure \ref{Fig_cross_val} displays the cross validation function for the Gaussian kernel. We observe between $\alpha=0.3$ and 
$\alpha=0.4$ a plateau leading to the value $\alpha=0.32$ since $\alpha$ has to be smaller than $1/3$. 
By the same method, we also obtain $\alpha= 0.32$ for the Epanechnikov kernel. 
\begin{figure}[tph]
\vspace{-3ex}
\begin{center}
\includegraphics[width=100mm,height=60mm]{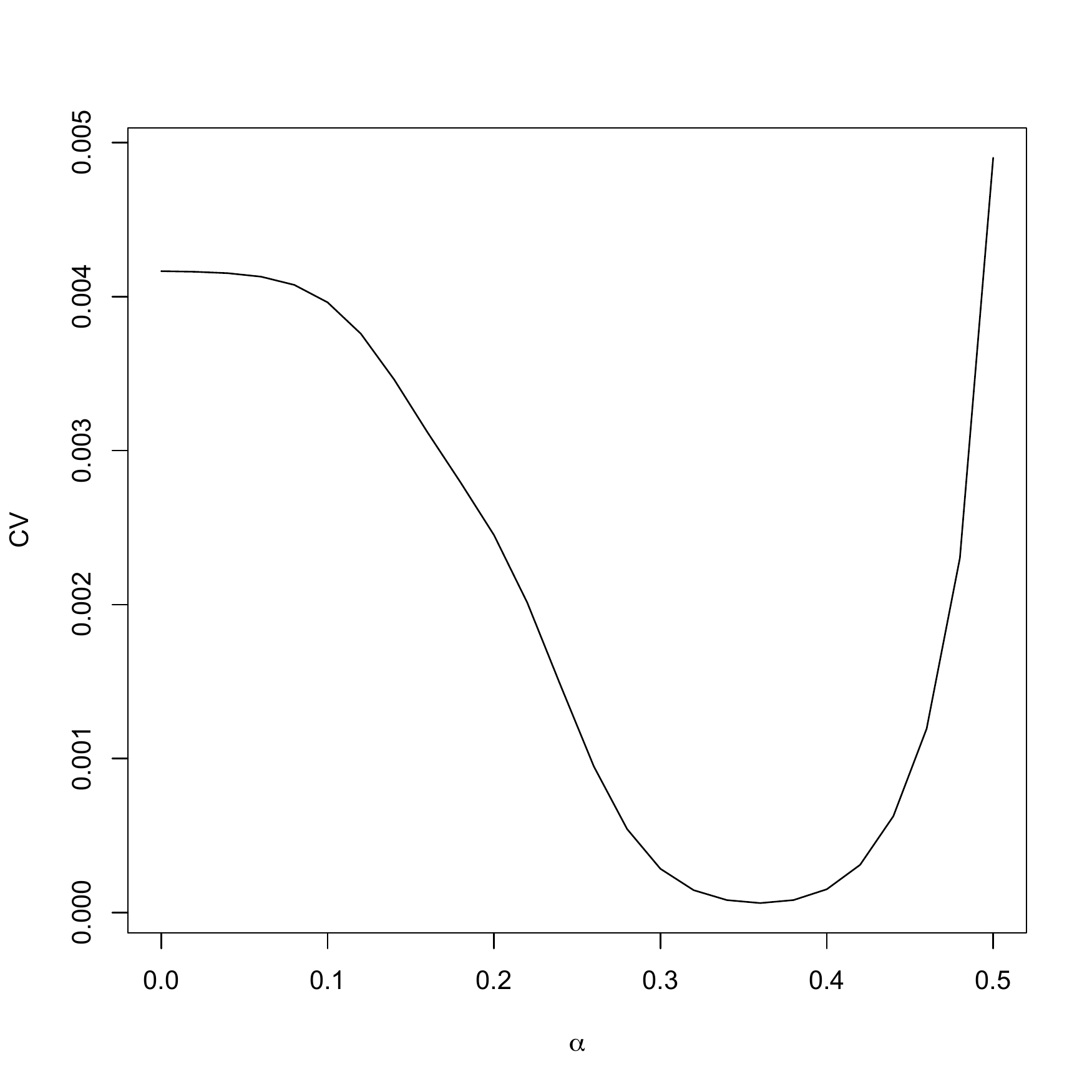}
\end{center}
\vspace{-3ex}
\caption{Example of the cross validation function for the estimates $\fnp$ of $f^{\prime}(x)$ with the Gaussian kernel.} 
\label{Fig_cross_val}
\end{figure}
\vspace{1ex}\\
Figure \ref{Fig_compar} illustrates the good approximation of $\widehat{f}_n^\prime$ to $f^{\prime}$
for the two kernels. Hereafter, we recall from Theorem \ref{T-CLT} that the asymptotic variance of our estimates depends on the integral
$\xi^2$ defined in \eqref{DefIntKp2}. Consequently, we select in our simulations the kernel with the smaller $\xi^2$ value.
It is easy to compute the values of  $\xi^2$ for the Gaussian and Epanechnikov kernels. They are respectively given 
by $\xi^2=1/4\sqrt{\pi} \simeq 0.1410$ and $\xi^2=3/2$. One can also observe that the Gaussian kernel has the smallest
$\xi^2$ value comparing to all commonly used kernels. Therefore, we shall use in all the sequel the Gaussian kernel.
\begin{figure}[tph]
\vspace{-2ex}
\begin{center}
\includegraphics[width=100mm,height=60mm]{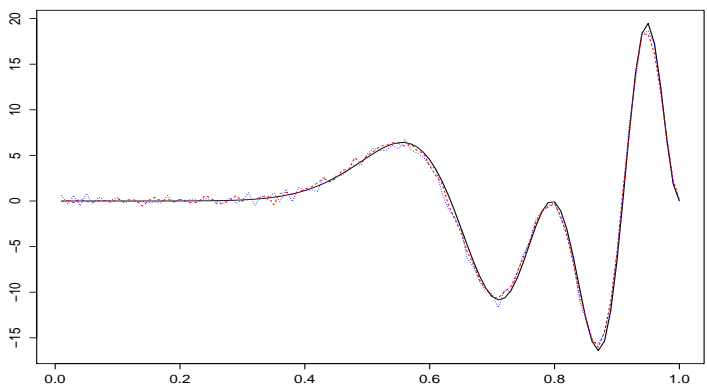}
\end{center}
\vspace{-2ex}
\caption{Representation of the estimator $\fnp$ of $f^{\prime}(x)$ for the Gaussian kernel (dashed line) and the Epanechnikov kernel (dotted line). The solid line is the underlying derivative $f^{\prime}(x)$.} 
\label{Fig_compar}
\end{figure}
\vspace{1ex}\\
After selecting $\alpha$ by cross validation, Figure \ref{Fig_Compare_Estim} shows that the three estimators $\fnp$, $\fcnp$, and $\ftnp$ approaches perfectly well the true derivative  $f^{\prime}(x)$.
 \begin{figure}[tph]
\vspace{-3ex}
\begin{center}
\includegraphics[width=100mm,height=60mm]{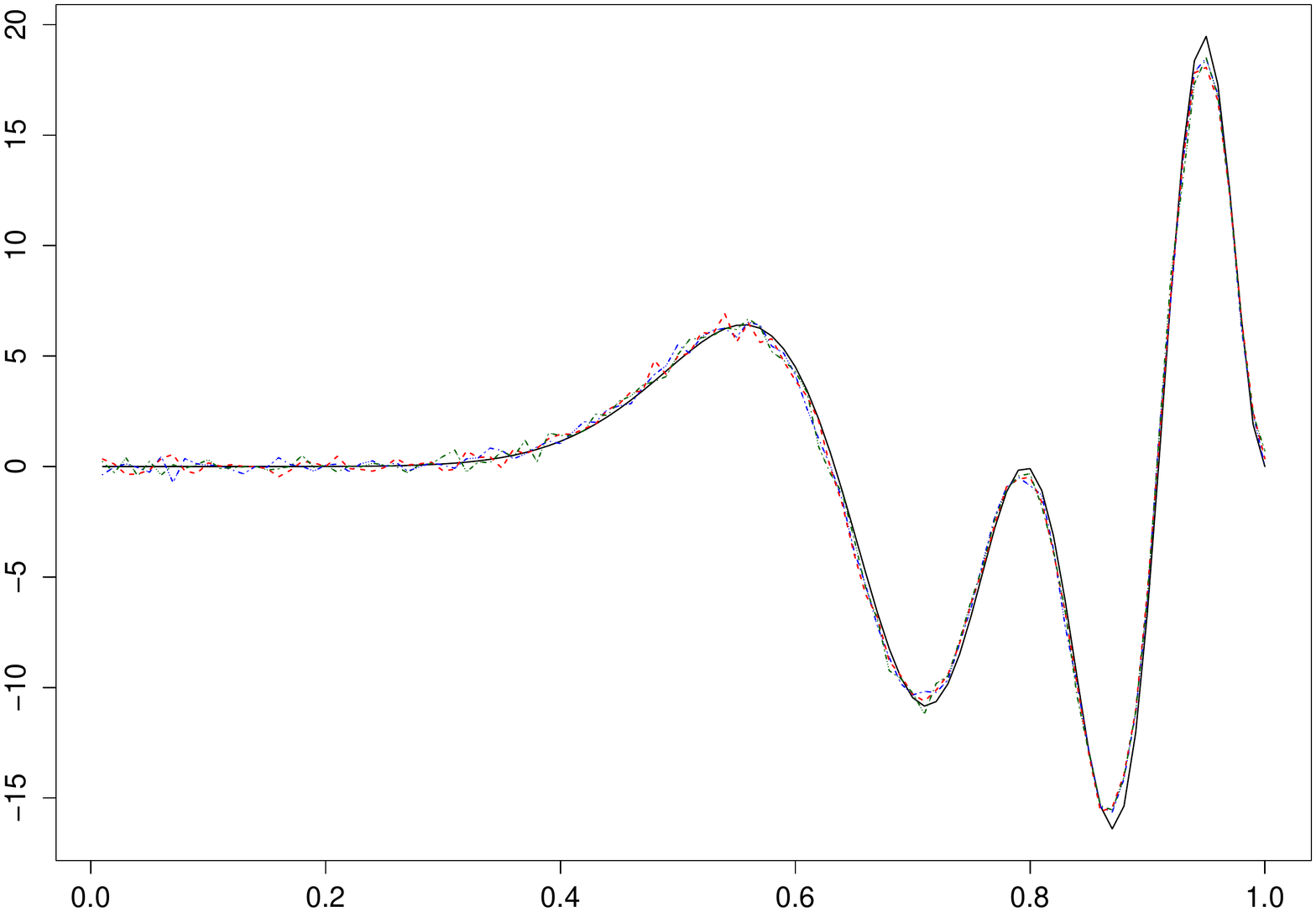}
\end{center}
\vspace{-3ex}
\caption{Illustration of the almost sure convergence of $\fnp$ (dotted line), $\ftnp$ (dashed line), and $\fcnp$ (dash-dotted line), to $f^{\prime}(x)$ (solid line).} 
\label{Fig_Compare_Estim}
\end{figure}
\vspace{1ex}\\
In order to illustrate the pointwise asymptotic normality of our estimates,  we implement a simulation study based on
$N=2\,000$ realizations.
We numerically check  the asymptotic normality at points $x=0.4$ and $x=0.9$ for our three estimators. 
One can see in Figure \ref{TLC_x} that the distributions of our three estimators are normally distributed and centered around $0$. We observe the effect of $f^{2}(x)$ in the asymptotic variance of $\ftnp$ and $\fcnp$. Indeed, for $x=0.4$ we have $f^2(x)=0.0036$, while for $x=0.9$ we have $f^2(x)=0.9489$ which explains the differences between the asymptotic variances.
\begin{figure}[htbp]
    \centering
%
%
\includegraphics[scale=0.4]{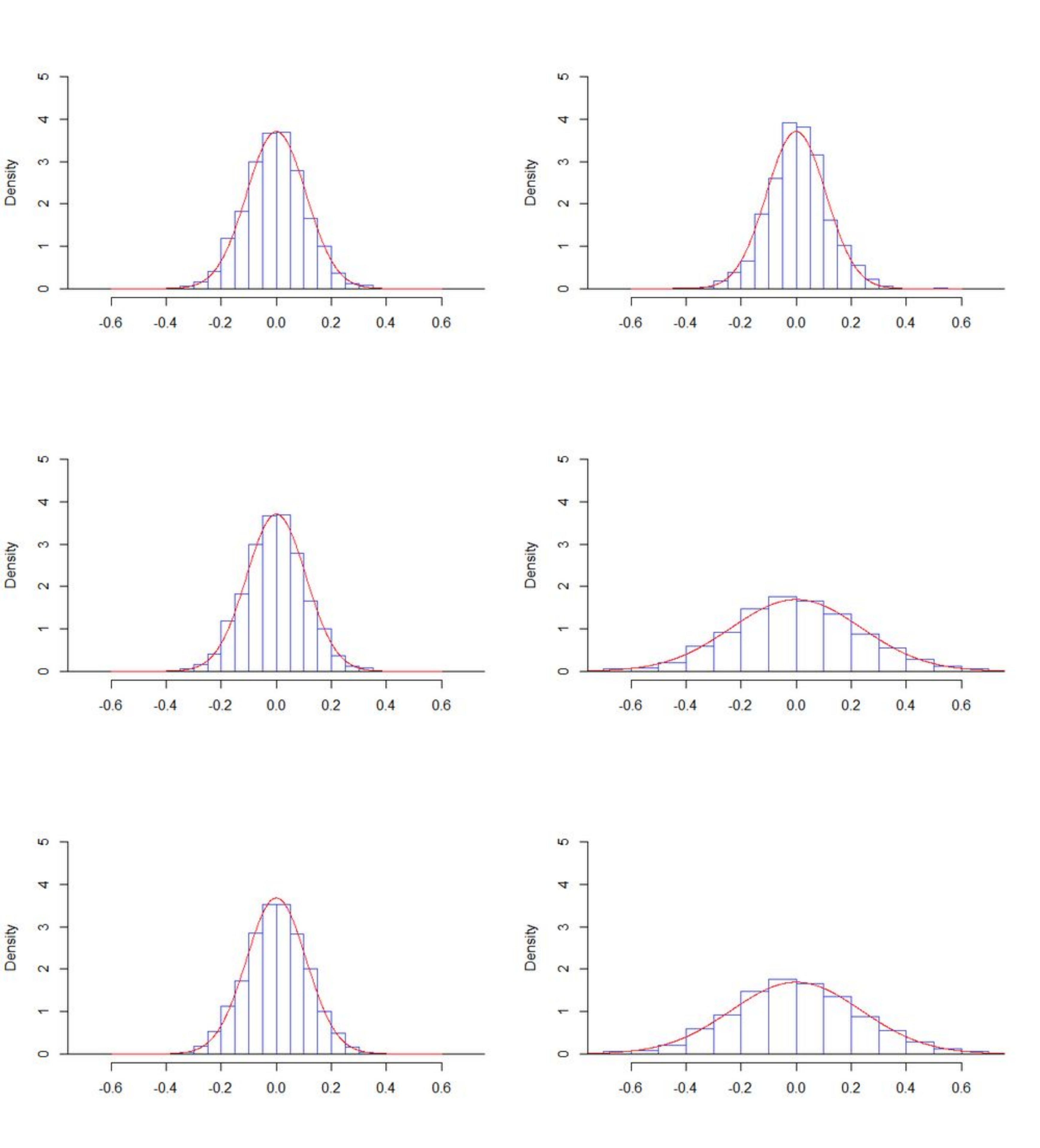}
\caption{Asymptotic normality of $\fnp$ (first row), $\ftnp$ (second row) and $\fcnp$ (third row) at point $x=0.4$ (left column) and 
point$x=0.9$ (right column). The density curves represent the asymptotic normal distributions given in Theorem \ref{T-CLT}.}
\label{TLC_x}
\end{figure}
\vspace{1ex}\\
One can observe in Figure \ref{Boxplot} the different behavior of the asymptotic variance
$\widehat f^\prime_n(x)$ in comparison with the two others estimators. Once again, a high variability coincides with a 
large value of $f^2(x)$.
\begin{figure}[htbp]
    \centering
\includegraphics[scale=0.75]{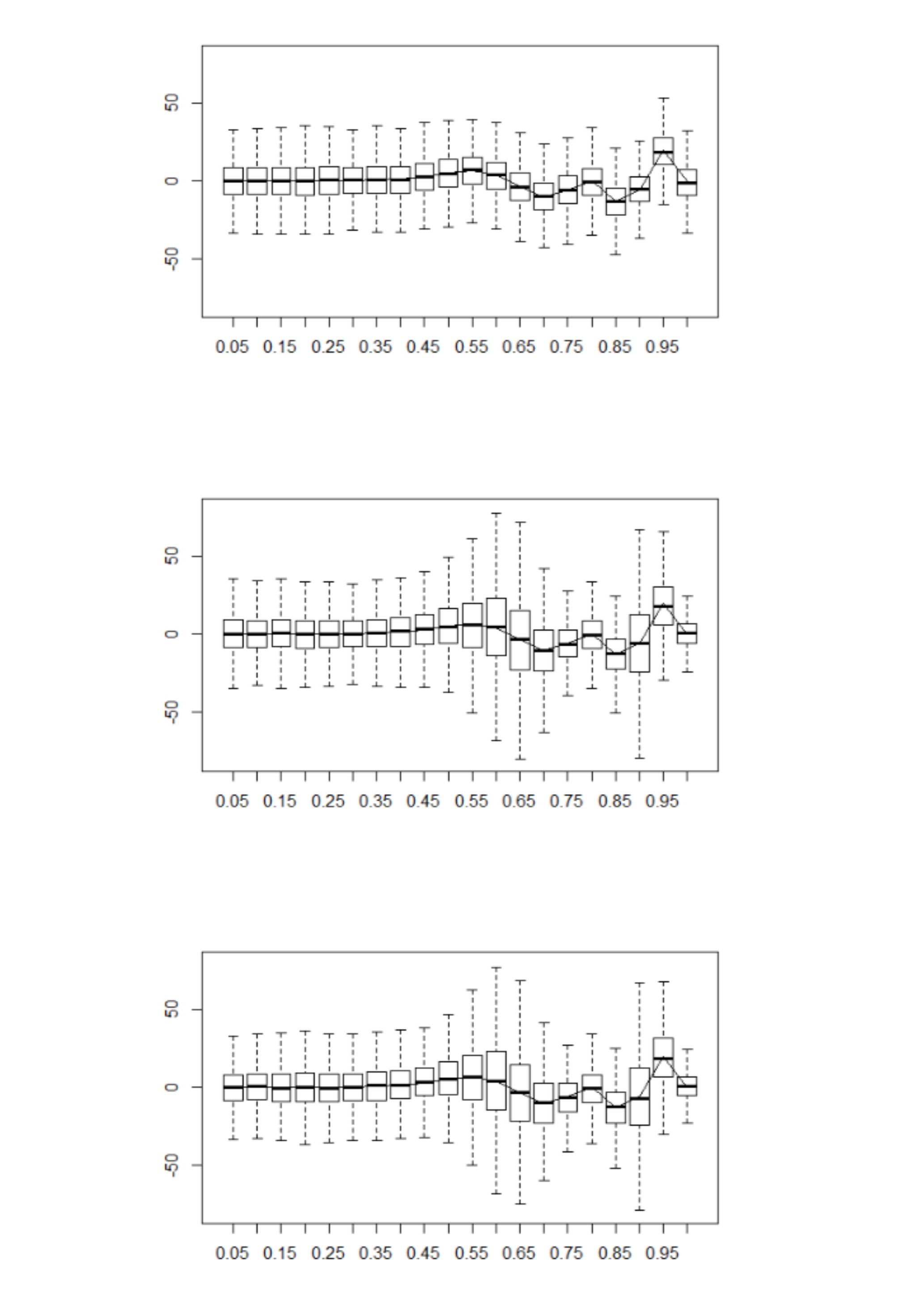}
\caption{Boxplots of the three estimates $\fnp$, $\fcnp$ and $\ftnp$ of $f^{\prime}(x)$ versus $x$. The solid line represents the underlying derivative function $f^{\prime}(x)$.}
\label{Boxplot}
\end{figure}
\vspace{1ex}\\
Finally, our numerical experiments illustrate the good performances and also the robustness of our statistical procedure for heavy-tailed error distributions \citealt{durrieu2009sequential} (data not shown). We also observed that the mean squared error of $\widehat f_n^\prime$ is much more smaller than
the mean squared error of the non-recursive version of the Nadaraya-Watson estimator. In term of asymptotic variance, 
it is clear that $\widehat f_n^\prime$ performs better than $\widetilde f_n^\prime$ and $\widecheck f_n^\prime$. Consequently,
we choose to make use of $\widehat f_n^\prime$ to estimate the derivative $f^{\prime}$ for our real life data
experiments.




\section{High-frequency valvometry data}
\label{sec:HFVD}

The motivation of this paper is to monitor sea shores water quality. For that purpose, we study bivalves activities by recording the valve movements. We use a high frequency, noninvasive valvometry electronic system developed by the UMR CNRS 5805 EPOC laboratory in Arcachon (France). The electronic principle of valvometry is described by \citealt{Tran2003}, \citealt{Chambon2007} and on the
website \url{http://molluscan-eye.epoc.u-bordeaux1.fr}. This electronic system works autonomously without human intervention for a long period of time (at least one full year). Each animal is equipped with two light coils (sensors), of approximately $53$mg each (unembedded), fixed on the edge of each valve. One of the coils emits a high-frequency, sinusoidal signal which is received by the other coil. The strength of the electric field produced between the two coils being proportional to the inverse of distance between the point of measurement and the center of the transmitting coil, the distance between coils can be measured and the accuracy of the measurements is a few $\mu$m.
\vspace{1ex}\\
For each sixteen animals, one measurement is received every $0.1$s (10 Hz). So, the activity of oyster is measured every $1.6$s and 
each day, we obtain $864\,000$ triplets of data: the time of the measurement, the distance between the two valves and the animal number. A first electronic card in a waterproof case next to the animals manages the electrodes and a second electronic card handles the data acquisition. The  valvometry system uses a GSM/GPRS modem and Linux operating system for the data storage, the internet access, and the data transmission. After each $24$h period or any other programmed period of time, the data are transmitted to a workstation server and then inserted in a SQL database which is accessible with the software R (\citealt{Rsoft}) or a text terminal.  
\vspace{1ex}\\
Several valvometric systems have been installed around the world: southern lagoon of New Caledonia, Spain, Ny Alesund Svalbard at $1300$ km from the north pole, the north east of Murmansk in Russia on the Barents sea and at several sites in France with various species but we concentrate here on  the Locmariaquer site situated in south Brittany based on sixteen oysters placed in a single bag. Locmariaquer (GPS coordinates $47^{\circ}34$ N, $2^{\circ}56$ W) is an important oyster farming area located near the narrow tidal pass which connects the gulf of Morbihan to the ocean, on the right side of the Auray river's mouth. Thus, oysters are close to the seasonal high traffic of the navigation channel and are potentially exposed to pollution as chemical residues of intensive agricultural practices.  
\vspace{1ex}\\
As argued in \citealt{ahmed2015}, \citealt{DurrieuAl2015} and \citealt{DurrieuAl2016},  pollution can affect the activity of oysters and in particular the shells opening and closing velocities and so the movement speeds can be considered as an indicator of the animal stress activity since its movements are associated to aquatic system perturbations. In \citealt{ahmed2015}, the authors propose an interesting deterministic alternative method for the estimation of movement velocity based on differentiator estimators. \\
An example  of valves activity and opening/closing velocity recordings June $2$, $2011$ is depicted in Figure \ref{Valvometric_signal}. Figure \ref{Valvometric_Estimator1} displays for the same day the plot of the estimate $\widehat f_n^\prime$ of $f^{\prime}$ of the valve closing and opening velocity for one oyster at the Locmariaquer site.  The bandwidth parameter was selected by the cross validation method
described in the previous section.
\vspace{1ex}
\begin{figure}[tph]
\begin{center}
\includegraphics[scale=0.4]{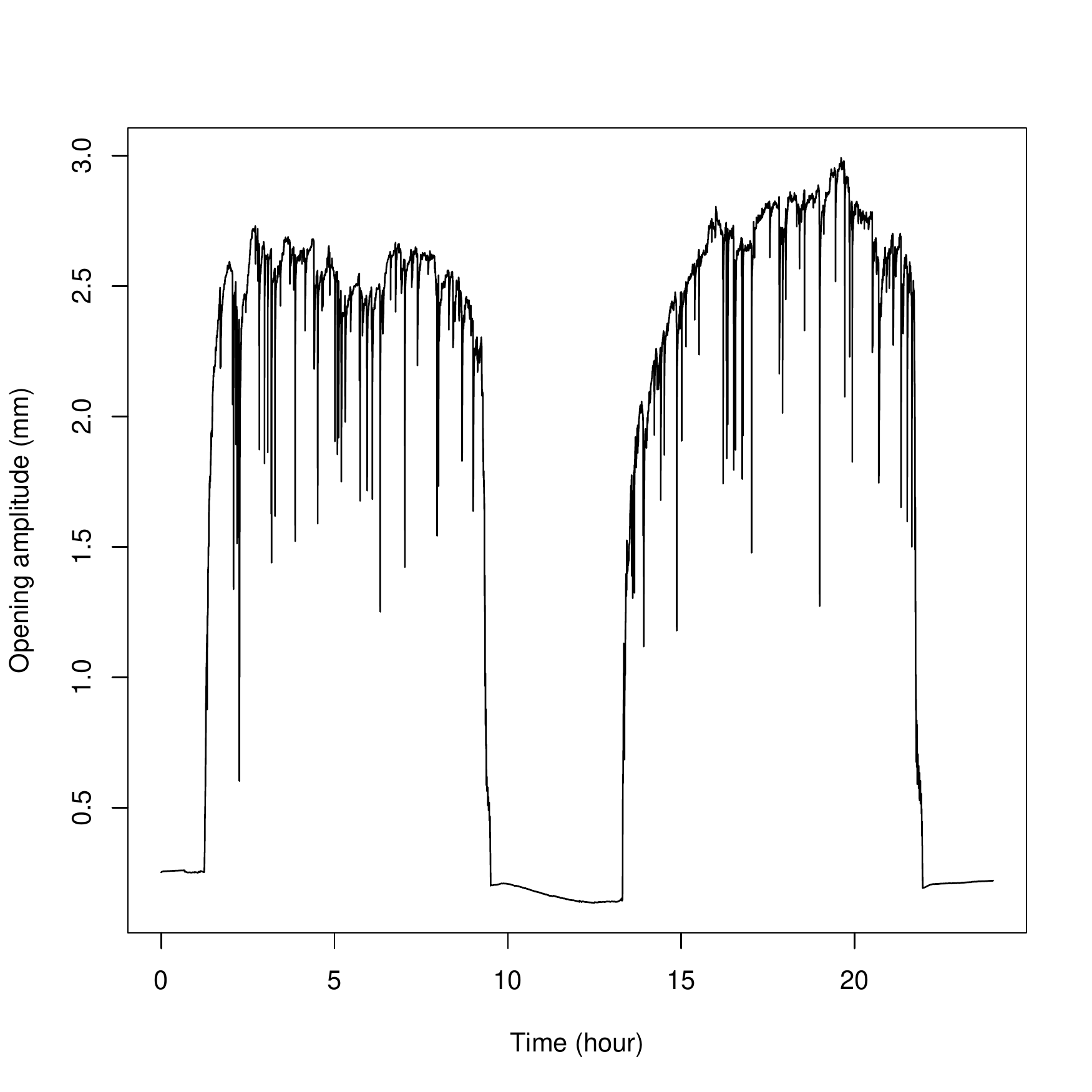}
\includegraphics[scale=0.4]{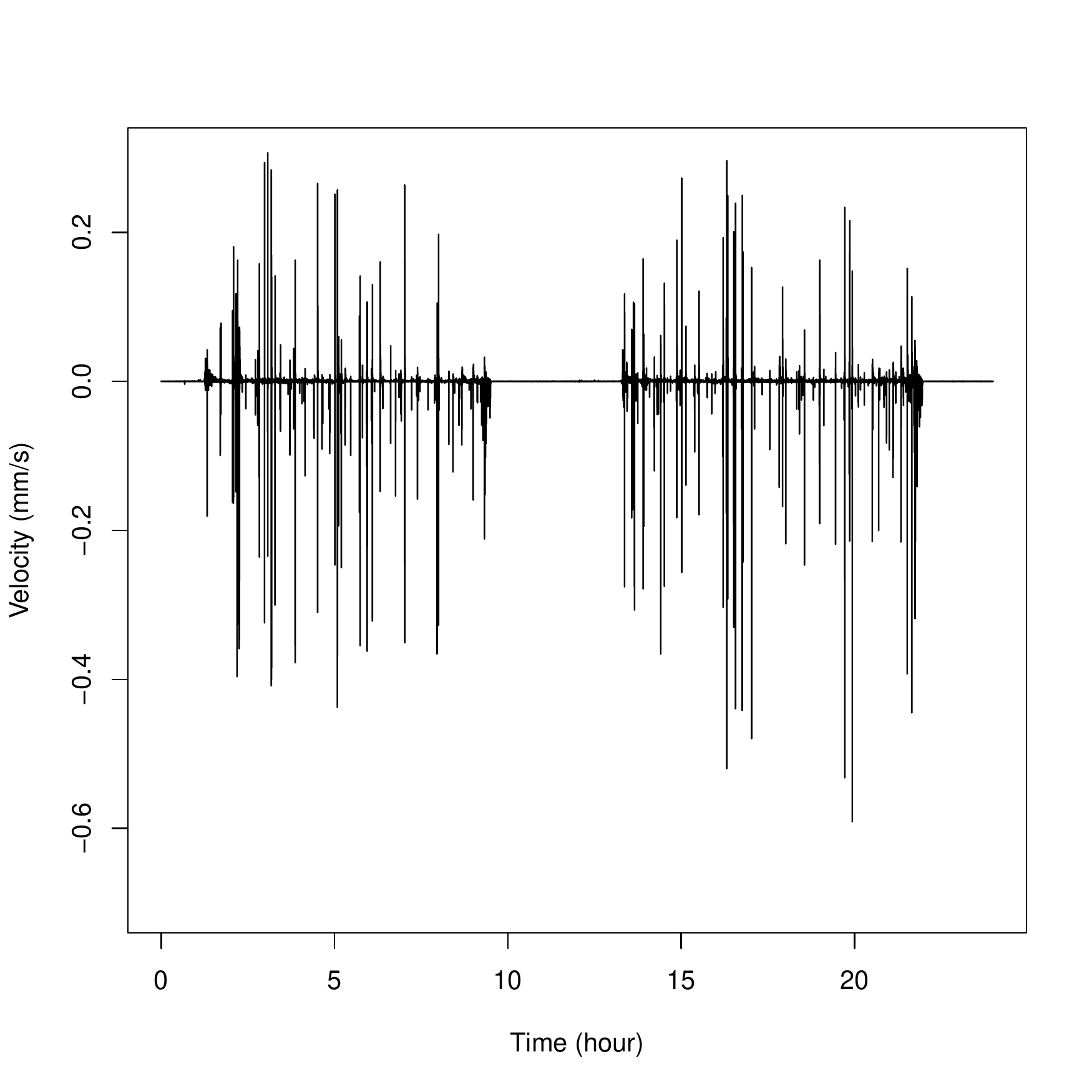}
\caption{A typical example of valvometric data for one oyster the June $2$, $2011$. In the left hand side, relationship between the opening amplitude (in millimeters) and the time of the experiment (over $24$ hours period). In the right hand side, the closing and opening velocity (millimeters per second) according to time (over the same period).}
\label{Valvometric_signal}
\end{center}
\end{figure}
\begin{figure}[tph]
\begin{center}
\includegraphics[width=120mm,height=70mm]{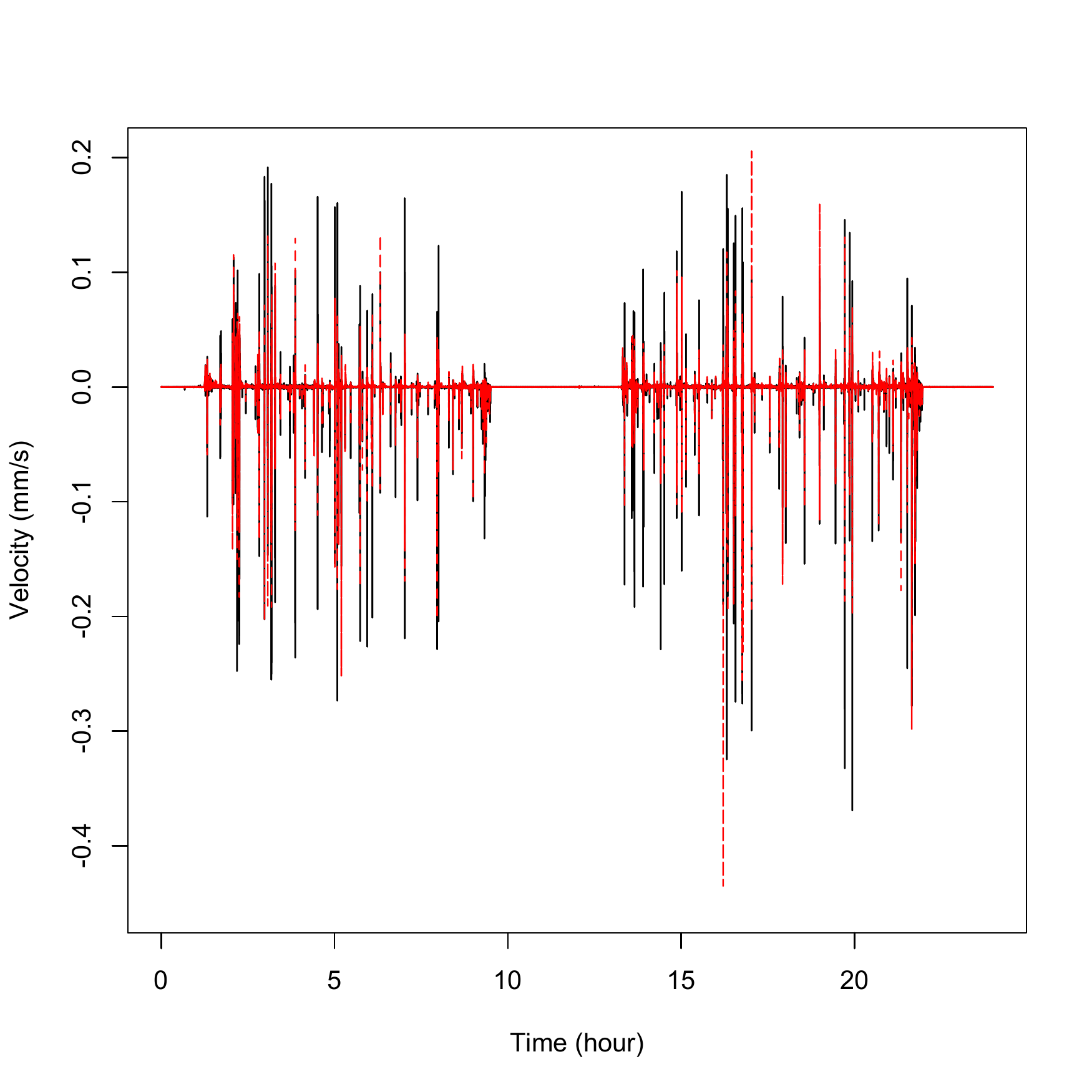}
\caption{The dashed line displays for June $2$, $2011$, the estimated $f^{\prime}(x)$ using estimator $\widehat f_n^\prime(x)$ versus the time $x$ and the solid lines represent the observed speeds of valve openings and closings. The closing and opening velocity are measured in millimeters per second.}
\label{Valvometric_Estimator1}
\end{center}
\end{figure}

\noindent To visualize the opening and closing velocity estimations of the $16$ oysters from the $63$th to the $243$th days of $2011$, we represent in Figure \ref{Fig-Vitesse-All}  for each oyster and each day the estimator $\widehat f_n^\prime(x)$ of the closing and opening velocities $f^{\prime}(x)$  at time $x$ over a period of time of 24 hours using a customized color table: the yellow color is associated to the class of the smallest velocities, the green color to the class of intermediate velocities and the red color to the class of the largest velocities. This graphical representation reveals distinct clusters of behaviors. We observe in the Figure \ref{Fig-Vitesse-All} white lines from the $207$th to the $210$th days, corresponding to a power outage on the site due to a storm. Before the $100$th days, the animals have a normal regular activity. The most red zone between the $100$th days and the $125$th days can be explained by a sudden change in temperature in the environment associated to the modifications of the specific activity of two enzymatic biomarkers (Glutathione-S-transferase and Acetylcholinesterase) meaning a possible pollution as described in  \citealt{DurrieuAl2016}. We observed an intense activity of closing at the bottom of the Figure (days $\geq 210$) associated to a spawning activity. 
\vspace{1ex}\\
Figure \ref{Fig-Vitesse-All} shows also that the closing and opening velocities are the smallest (yellow zone) and highly correlated with the tidal amplitude. We have performed many other analyses of these data using extreme value theory and other nonparametric statistical methods, all of which point the same conclusion \citealt{CoudretAl2015}, \citealt{DurrieuAl2015} and \citealt{DurrieuAl2016}. Altogether, we anticipate that this approach could have a significant contribution providing {\it in situ} instant diagnosis of the bivalves behavior and thus appears to be an effective, early warning tool in ecological risk assessment.


\begin{figure}[tph]
    \centering
\includegraphics[width=160mm,height=130mm]{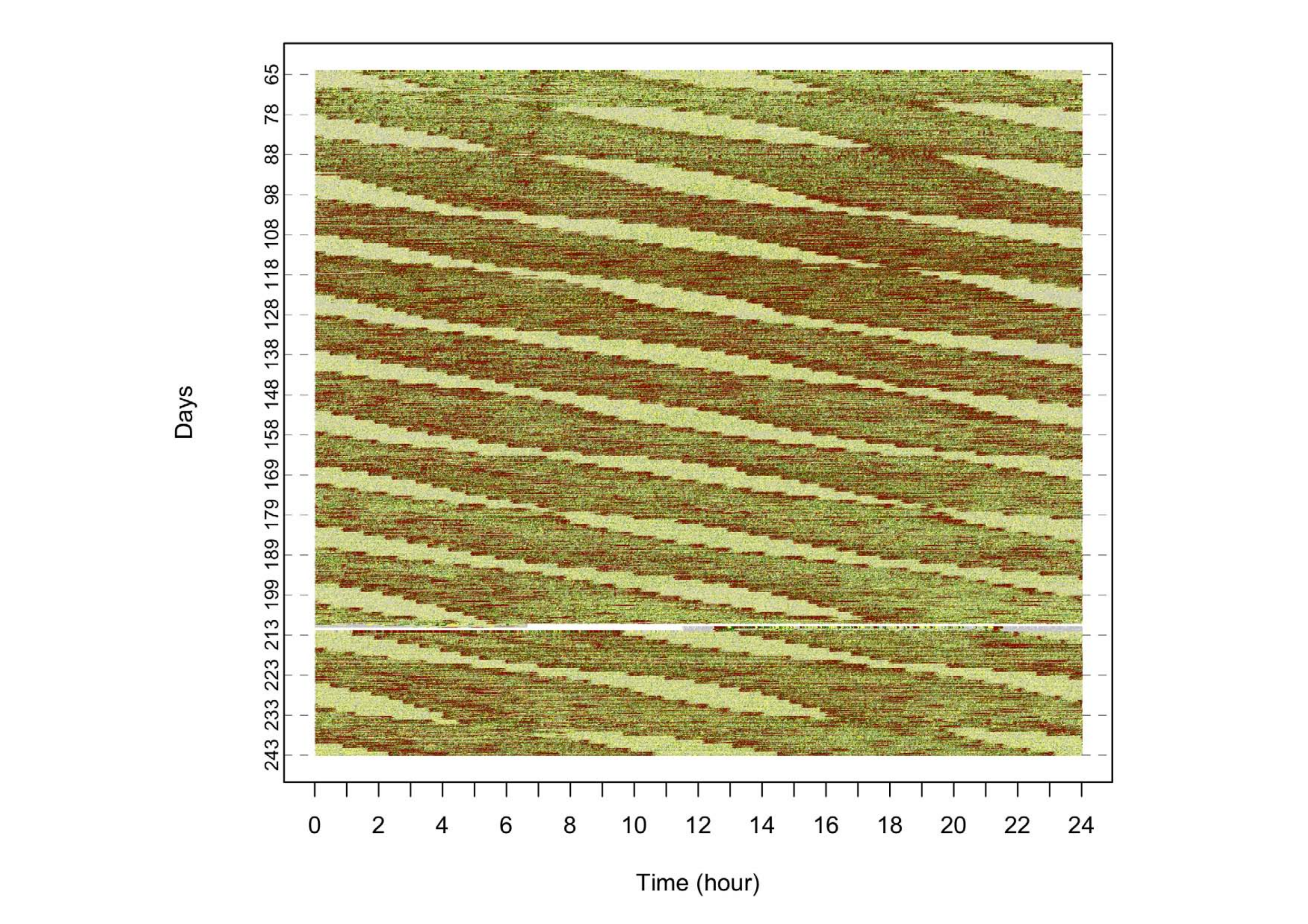}
\caption{Representation of the opening and closing velocities estimation using $\widehat f_n^\prime(x)$ from the
$63$th to the $243$th days of $2011$, considering the $16$ oysters in Locmariaquer. The x-axis represents the time in a $24$ hour time period and the y-axis represents the number of days since January $1$, $2011$.}
\label{Fig-Vitesse-All}
\end{figure}

\newpage
\appendix
\section{Proofs of the almost sure convergence results.}
\label{AppendixA}
\renewcommand{\thesection}{\Alph{section}}
\renewcommand{\theequation}{\thesection.\arabic{equation}}
\setcounter{section}{1}
\setcounter{equation}{0}
%

The proofs of the almost sure convergence results rely on the following lemma. 
We also refer the reader to \citealt{silverman1986density} for the estimation of the 
derivative of the Parzen-Rosenblatt estimator.

\begin{lem}
\label{L-ASCVG}
Assume that $(\mathcal{A}_1)$, $(\mathcal{A}_2)$ and $(\mathcal{A}_3)$ hold. Then, 
the estimators $\wh{g}_n$ and $\wh{h}_n$, given by \eqref{DEFHGHAT}, satisfy
for any $x \in \dR$, 

\begin{equation}
\label{ASCVGG}
\lim_{n\rightarrow \infty} \gn = g(x) \hspace{1cm}\text{a.s.}
\end{equation} 

\begin{equation}
\label{ASCVGH}
\lim_{n\rightarrow \infty} \hn = f(x)g(x) \hspace{1cm}\text{a.s.}
\end{equation} 

\noindent
Moreover, as soon as $0<\alpha<1/3$, we also have for any $x \in \dR$,

\begin{equation}
\label{ASCVGGP}
\lim_{n\rightarrow \infty} \gnp = g^\prime(x) \hspace{1cm}\text{a.s.}
\end{equation} 

\begin{equation}
\label{ASCVGHP}
\lim_{n\rightarrow \infty} \hnp = \bigl(f(x)g(x)\bigr)^\prime \hspace{1cm}\text{a.s.}
\end{equation} 

\end{lem}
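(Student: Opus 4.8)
The plan is to establish all four convergences through the classical bias--fluctuation decomposition: write each recursive estimator as the sum of its expectation (a deterministic term) and a centered remainder, show the expectation converges to the stated limit by a change of variables together with the smoothness hypotheses in $(\cA_2)$ and, for the derivative statements, the moment conditions in $(\cA_1)$, and drive the remainder to zero by a strong law of large numbers for independent, non-identically distributed summands obtained from Kolmogorov's convergence criterion and Kronecker's lemma.

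First I would treat $\gn$. Setting $Z_k=\frac{1}{h_k}K\!\left(\frac{x-X_k}{h_k}\right)$, the variables $Z_k$ are independent and $\gn=\frac1n\sum_{k=1}^n Z_k$. The substitution $u=(x-y)/h_k$ gives $\dE[Z_k]=\int_{\dR}K(u)g(x-h_ku)\dd u\to g(x)$ by boundedness and continuity of $g$ and dominated convergence, so a Cesàro argument yields $\frac1n\sum_{k=1}^n\dE[Z_k]\to g(x)$. For the remainder, $\mathrm{Var}(Z_k)\le\dE[Z_k^2]=\frac{1}{h_k}\int_{\dR}K^2(u)g(x-h_ku)\dd u\le C/h_k$, hence with $h_k=k^{-\alpha}$ one has $\sum_{k\ge1}k^{-2}\mathrm{Var}(Z_k)\le C\sum_{k\ge1}k^{\alpha-2}<\infty$ for every $\alpha<1$; Kolmogorov's criterion makes $\sum_k k^{-1}(Z_k-\dE[Z_k])$ converge a.s., and Kronecker's lemma gives $\frac1n\sum_{k=1}^n(Z_k-\dE[Z_k])\to0$ a.s., which proves \eqref{ASCVGG}. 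Statement \eqref{ASCVGH} follows in the same way after writing $Y_k=f(X_k)+\veps_k$: the summand built from $f(X_k)$ is treated exactly as $Z_k$ but with $g$ replaced by the bounded continuous function $fg$, producing the limit $f(x)g(x)$, whereas the summand built from $\veps_k$ is centered (because $(\veps_n)$ and $(X_n)$ are independent and $\dE[\veps_k]=0$, which also kills the cross term in the variance) with second moment at most $C\sigma^2/h_k$, so the same Kronecker argument applies.

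For the derivatives the structure is unchanged; only the order of the variance differs. With $W_k=\frac{1}{h_k^2}K'\!\left(\frac{x-X_k}{h_k}\right)$ one has $\gnp=\frac1n\sum_{k=1}^n W_k$ and, after the same substitution, $\dE[W_k]=\frac{1}{h_k}\int_{\dR}K'(u)g(x-h_ku)\dd u$. A second-order Taylor expansion of $g$ at $x$, together with $\int_{\dR}K'(u)\dd u=0$, $\int_{\dR}uK'(u)\dd u=-1$ and the finiteness of $\int_{\dR}u^2|K'(u)|\dd u$ (a consequence of $(\cA_1)$), gives $\dE[W_k]=g'(x)+O(h_k)\to g'(x)$, so the Cesàro average converges to $g'(x)$. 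Now $\mathrm{Var}(W_k)\le\dE[W_k^2]=\frac{1}{h_k^3}\int_{\dR}\bigl(K'(u)\bigr)^2 g(x-h_ku)\dd u\le C/h_k^3$, so $\sum_{k\ge1}k^{-2}\mathrm{Var}(W_k)\le C\sum_{k\ge1}k^{3\alpha-2}$, which is finite exactly when $\alpha<1/3$: this is precisely where the restriction $0<\alpha<1/3$ enters. Kronecker's lemma then yields \eqref{ASCVGGP}, and \eqref{ASCVGHP} follows by the same $Y_k=f(X_k)+\veps_k$ splitting, the $f(X_k)$ part producing the limit $\bigl(fg\bigr)'(x)$ (note that $fg$ is twice differentiable with bounded derivatives under $(\cA_2)$) and the $\veps_k$ part being centered with second moment $O(\sigma^2/h_k^3)$.

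The changes of variables, the Cesàro averaging and the Taylor estimates are routine; the one delicate point is the strong law for the non-identically distributed independent summands, where one must track the exact power of $h_k$ appearing in the variance bound — this is what pins down the admissible range of $\alpha$ — and check that only the second moment of $\veps_k$ (supplied by $(\cA_3)$) is needed, so that no additional integrability of the noise is required at this stage.
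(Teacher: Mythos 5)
Your proof is correct, and the bias part (change of variables, second--order Taylor expansion using $\int_{\dR}K'(u)\,du=0$, $\int_{\dR}uK'(u)\,du=-1$, and Ces\`aro averaging) is essentially the same computation as in the paper. Where you genuinely diverge is in handling the centered fluctuation: the paper writes $n\hnp=M_n^{\!A}(x)+\dE[A_n(x)]+B_n(x)$, computes the predictable quadratic variations $\langle M^{\!A}(x)\rangle_n$ and $\langle B(x)\rangle_n$, shows they grow like $n^{1+3\alpha}$, and invokes the strong law of large numbers for martingales (Theorem 1.3.15 of Duflo), which gives $(M_n^{\!A}(x))^2=o(n^{1+3\alpha}(\log n)^{1+\gamma})$ a.s.\ and hence $M_n^{\!A}(x)/n\to 0$ precisely when $\alpha<1/3$; you instead exploit the full independence of the summands (legitimate here, since $Z_k$ and $\veps_k u_k(X_k,x)$ depend only on the independent pairs $(X_k,\veps_k)$) and apply Kolmogorov's convergence criterion to $\sum_k k^{-1}(Z_k-\dE[Z_k])$ followed by Kronecker's lemma, with the variance orders $O(h_k^{-1})$ and $O(h_k^{-3})$ pinning down $\alpha<1$ for \eqref{ASCVGG}--\eqref{ASCVGH} and $\alpha<1/3$ for \eqref{ASCVGGP}--\eqref{ASCVGHP}, exactly as in the lemma. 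Your route is more elementary (no martingale machinery, only the second moment of the noise, as you note), and it even shows the first two limits hold for the whole range $0<\alpha<1$; what the paper's martingale formulation buys is reuse: the same quadratic-variation limits \eqref{CVGQVMNBN} and the martingale decompositions are the backbone of the central limit theorem proofs in Appendix \ref{AppendixB}, and the martingale SLLN also delivers an almost sure rate for free. Two small points you implicitly use and should make explicit if you write this up: $\int_{\dR}u^2|K'(u)|\,du<\infty$ and $\int_{\dR}(K'(u))^2\,du<\infty$ are not listed verbatim in $(\mathcal{A}_1)$ but do follow from the boundedness of $K'$ together with $\int_{\dR}x^4|K'(x)|\,dx<\infty$.
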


\begin{proof}
We shall only prove the almost sure convergence \eqref{ASCVGHP} inasmuch as 
\eqref{ASCVGG} and \eqref{ASCVGH} are well-known and
the proof of \eqref{ASCVGGP} is more easy to handle and follow the same lines
as the proof of \eqref{ASCVGHP}. 
We deduce from \eqref{MODEL} and \eqref{DEFHGHAT} that for any $x \in \dR$,
$$
\hn =\dfrac{1}{n} \skn \dfrac{f(X_k)}{h_k} \KXk + \dfrac{1}{n} \skn \dfrac{\veps_k}{h_k} \KXk.
$$
Hence, by derivation, we have the decomposition
\begin{equation}
\label{DECOHN}
 n\hnp = A_n(x) + B_n(x)
\end{equation}
where 
\begin{align*}
A_n(x) &= \skn a_k(x)= \skn f(X_k)v_k(X_k,x), \\
B_n(x) &= \skn b_k(x)= \skn \veps_k v_k(X_k,x) 
\end{align*}
with
\begin{equation}
\label{vn}
v_n(X_n,x)= \dfrac{1}{h_n^2} \KpXn.
\end{equation}
On the one hand, we have for any $x \in \dR$,
\begin{align}
\dE [a_n(x)] &= \int_{\dR} f(x_n) v_n(x_n,x) g(x_n) dx_n \notag\\
&=\dfrac{1}{h_n}\int_{\dR} f(x-h_ny)g(x-h_ny)K^\prime(y) dy.
\label{PRODAN}
\end{align}
The regression function $f$ as well as the density function $g$ are bounded continuous
and twice differentiable with bounded derivatives. Consequently, it follows from
Taylor's formula that it exist $\theta_f, \theta_g$ in the interval $]0,1[$ such that, for any $x \in \dR$,
$$
f(x-h_ny) = f(x)-h_n y f^\prime(x)+\dfrac{h_n^2y^2}{2}f^{\prime \prime}(x-h_ny \theta_f), 
$$
and 
$$
g(x-h_ny) = g(x)-h_n y g^\prime(x)+\dfrac{h_n^2y^2}{2}g^{\prime \prime}(x-h_ny \theta_g).
$$
By a careful analysis of each term in the product $f(x-h_ny)g(x-h_ny)$, we deduce from 
\eqref{PRODAN} together with assumption $(\mathcal{A}_1)$ that
\begin{align}
\dE [a_n(x)] & =  - \bigl(f(x)g(x)\bigr)^\prime \int_{\dR} yK^\prime(y) dy
+h_n f^\prime(x)g^\prime(x) \int_{\dR} y^2K^\prime(y) dy + R_n(x) \notag \\
& =  \bigl(f(x)g(x)\bigr)^\prime + R_n(x)
\label{MEANAN}
\end{align}
where the remainder $R_n(x)$ satisfies
$$
\sup_{x \in \dR} |R_n(x)|=O (h_n).
$$
Consequently, \eqref{MEANAN} immediately leads to
\begin{equation}
\lim_{n\rightarrow \infty} \frac{1}{n}\dE[A_n(x)]= \bigl(f(x)g(x)\bigr)^\prime,
\label{CVGMEANAN}
\end{equation}
which is the limit we are looking for. By the same token,
\begin{align}
\dE [a_n^2(x)] & = \int_{\dR} f^2(x_n) v_n^2(x_n,x) g(x_n) dx_n \notag\\
& =\frac{1}{h_n^3}\int_{\dR} f^2(x-h_ny)g(x-h_ny)\bigl(K^\prime(y)\bigr)^2 dy \notag \\
& = \frac{1}{h_n^3} \xi^2 f^2(x)g(x) + \zeta_n(x),
\label{M2AN}
\end{align}
where $\xi^2$ is defined in \eqref{DefIntKp2} and 
the remainder $\zeta_n(x)$ is such that
$$
\sup_{x \in \dR} |\zeta_n(x)|=O \Bigl(\frac{1}{h_n^2}\Bigr).
$$
Therefore, we deduce from \eqref{MEANAN} and \eqref{M2AN} that
\begin{equation}
\lim_{n\rightarrow \infty} \frac{1}{n^{1+3\alpha}}\text{Var}(A_n(x))= \frac{\xi^2 f^2(x)g(x)}{1+3\alpha}.
\label{CVGVARAN}
\end{equation}
On the other hand, denote by $\cF_n$ the $\sigma$-algebra of the events occurring up to time $n$,
$\cF_n=\sigma(X_1,\veps_1,\ldots,X_n,\veps_n)$. Since $(X_n)$ and $(\veps_n)$ are two independent
sequences of independent and identically distributed random variables, we have for any $x \in \dR$,
$$\dE[b_n(x)|\cF_{n-1}]= \dE[\veps_nv_n(X_n,x) |\cF_{n-1}]=\dE[\veps_nv_n(X_n,x)]=0.$$
Moreover,
$$
\dE[b_n^2(x)|\cF_{n-1}]= \dE[\veps_n^2v_n^2(X_n,x) |\cF_{n-1}]=\dE[\veps_n^2v_n^2(X_n,x)]=\sigma^2\dE[v_n^2(X_n,x)].
$$
Furthermore, we have
\begin{align}
\dE [v_n^2(X_n, x)] & = \int_{\dR}  v_n^2(x_n,x) g(x_n) dx_n =\frac{1}{h_n^3}\int_{\dR} g(x-h_ny)\bigl(K^\prime(y)\bigr)^2 dy \notag \\
& = \frac{1}{h_n^3} \int_{\dR} \Bigl(g(x)-h_n y g^\prime(x)+
\dfrac{h_n^2y^2}{2}g^{\prime \prime}(x-h_ny \theta_g)\Bigr)\bigl(K^\prime(y)\bigr)^2 dy \notag \\
& = \frac{1}{h_n^3} \xi^2 g(x)  + \Delta_n(x)
\label{M2VN}
\end{align}
where $\xi^2$ is defined in \eqref{DefIntKp2}
and the remainder $\Delta_n(x)$ is such that
$$
\sup_{x \in \dR} |\Delta_n(x)|=O \Bigl(\frac{1}{h_n^2}\Bigr).
$$
Consequently, denoting
$$
W_n(x)=\skn v_k^2(X_k,x),
$$
it follows from \eqref{M2VN} that
\begin{equation}
\lim_{n\rightarrow \infty} \frac{1}{n^{1+3\alpha}}\dE[W_n(x)]
= \frac{\xi^2 g(x)}{1+3\alpha}.
\label{CVGMEANVN}
\end{equation}
We are now in the position to prove the almost sure convergence \eqref{ASCVGHP}. The decomposition 
\eqref{DECOHN} can be rewritten as
\begin{equation}
\label{NEWDECOHN}
n \hnp = M_n^{\!A}(x) + \dE[A_n(x)] + B_n(x),
\end{equation}
where $M_n^{\!A}(x)=A_n(x)-\dE[A_n(x)]$. One can observe that $(M_n^{\!A}(x))$ and $(B_n(x))$ are both
square integrable martingale difference sequences with predictable quadratic variations respectively given by
$\langle M^{\!A}(x) \rangle_n\!=\text{Var}(A_n(x))$ and $\langle B(x) \rangle_n\!=\sigma^2\dE[W_n(x)]$. Consequently,
\eqref{CVGVARAN} together with \eqref{CVGMEANVN} immediately lead to
\begin{equation}
\label{CVGQVMNBN}
\lim_{n\rightarrow \infty} \frac{\langle M^A(x) \rangle_n}{n^{1+3\alpha}}=\frac{\xi^2 f^2(x)g(x)}{1+3\alpha}
\hspace{1cm}\text{and}\hspace{1cm}
\lim_{n\rightarrow \infty} \frac{\langle B(x) \rangle_n}{n^{1+3\alpha}}=\frac{\sigma^2 \xi^2 g(x)}{1+3\alpha}.
\end{equation}
Hence, we obtain from the strong law of large numbers for martingales
given e.g. by Theorem 1.3.15 of \cite{duflo1997random} that, for any $\gamma >0$,
$(M_n^{\!A}(x))^2=o(n^{1+3\alpha}(\log n)^{1+\gamma})$ a.s. and $(B_n(x))^2=o(n^{1+3\alpha}(\log n)^{1+\gamma})$ a.s.  
Therefore, as $0<\alpha<1/3$, it ensures that, for any $x\in \dR$
\begin{equation}
\label{ASCVGMNBN}
\lim_{n\rightarrow \infty} \frac{1}{n} M_n^{\!A}(x) = 0 \hspace{0.3cm} \text{a.s.}
\hspace{1cm}\text{and}\hspace{1cm}
\lim_{n\rightarrow \infty} \frac{1}{n} B_n(x) = 0 \hspace{0.3cm} \text{a.s.}
\end{equation}
Finally, we deduce from decomposition \eqref{NEWDECOHN} together with \eqref{CVGMEANAN} and \eqref{ASCVGMNBN} that
for any $x \in \dR$,
\begin{equation*}
\lim_{n\rightarrow \infty} \hnp = \bigl(f(x)g(x)\bigr)^\prime \hspace{1cm}\text{a.s.}
\end{equation*} 
Thus Lemma \ref{L-ASCVG} is proven.
\end{proof}

\noindent
{\bf Proof of Theorem \ref{T-ASCVG}.} We shall now proceed to the proof of the Theorem \ref{T-ASCVG}.
It clearly follows from relation \eqref{NWD} and Lemma \ref{L-ASCVG} that for any $x \in \dR$ such that
$g(x)>0$,
\begin{align*}
 \lim_{n\rightarrow +\infty}\fnp & =\lim_{n\rightarrow +\infty}\Bigl(\frac{\hnp}{\gn}-\frac{\hn \gnp}{\gnc}\Bigr)
 = \frac{\bigl(f(x)g(x)\bigr)^\prime}{g(x)}-\frac{f(x)g(x) g^{\prime}(x)}{g^2(x)} \hspace{1cm}\text{a.s.}\\ 
 & = \frac{f^\prime(x)g(x)+f(x)g^{\prime}(x)-f(x)g^{\prime}(x)}{g(x)}=f^\prime(x)\hspace{1cm}\text{a.s.} 
 \end{align*}
By the same token, relation \eqref{JD} and Lemma \ref{L-ASCVG} immediately lead to
$$
\lim_{n\rightarrow +\infty} \ftnp  =\lim_{n\rightarrow +\infty}\Bigl(\frac{\hnp}{g(x)}-\frac{\hn g'(x)}{g(x)^2}\Bigr)=f^\prime(x)\hspace{1cm}\text{a.s.} 
$$
It only remains to prove \eqref{ASCVGWJD}. We obtain from relation \eqref{WJD} that
\begin{equation}
\label{DECOFCPN}
 n\fcnp = C_n(x) + D_n(x)
\end{equation}
where 
\begin{align*}
C_n(x) &= \skn c_k(x)= \skn \frac{f(X_k)}{g(X_k)}v_k(X_k,x), \\
D_n(x) &= \skn d_k(x)= \skn \frac{\veps_k}{g(X_k)}v_k(X_k,x). 
\end{align*}
As in the proof of Lemma \ref{L-ASCVG}, we find that for any $x \in \dR$ such that
$g(x)>0$,
\begin{equation}
\lim_{n\rightarrow \infty} \frac{1}{n}\dE[C_n(x)]= f^\prime(x)
\label{CVGMEANVARCN}
\hspace{0.5cm}
\text{and}
\hspace{0.5cm}
\lim_{n\rightarrow \infty} \frac{1}{n^{1+3\alpha}}\text{Var}(C_n(x))= \frac{\xi^2 f^2(x)}{(1+3\alpha)g(x)}.
\end{equation}
Hereafter, we split $n\fcnp$ into three terms
\begin{equation}
\label{NEWDECOFCPN}
 n\fcnp = M_n^{C}(x) + \dE[C_n(x)] + D_n(x)
\end{equation}
where $M_n^{C}(x)=C_n(x)-\dE[C_n(x)]$. One can observe that $(M_n^{C}(x))$ and $(D_n(x))$ are both
square integrable martingale difference sequences with predictable quadratic variations satisfying,
for any $x \in \dR$ such that $g(x)>0$,
\begin{equation}
\label{CVGQVNNDN}
\lim_{n\rightarrow \infty} \frac{\langle M^{C}(x) \rangle_n}{n^{1+3\alpha}}=\frac{\xi^2 f^2(x)}{(1+3\alpha)g(x)}
\hspace{0.5cm}\text{and}\hspace{0.5cm}
\lim_{n\rightarrow \infty} \frac{\langle D(x) \rangle_n}{n^{1+3\alpha}}=\frac{ \xi^2 \sigma^2}{(1+3\alpha)g(x)}.
\end{equation}
Therefore, we deduce from the strong law of large numbers for martingales
that, as soon as $0<\alpha<1/3$, 
\begin{equation}
\label{ASCVGNNDN}
\lim_{n\rightarrow \infty} \frac{1}{n} M_n^{C}(x) = 0 \hspace{0.3cm} \text{a.s.}
\hspace{1cm}\text{and}\hspace{1cm}
\lim_{n\rightarrow \infty} \frac{1}{n} D_n(x) = 0 \hspace{0.3cm} \text{a.s.}
\end{equation}
Finally, it follows from \eqref{NEWDECOFCPN} together with \eqref{CVGMEANVARCN} and \eqref{ASCVGNNDN} that
for any $x \in \dR$ such that $g(x)>0$,
\begin{equation*}
\lim_{n\rightarrow \infty} \fcnp = f^\prime(x) \hspace{1cm}\text{a.s.}
\end{equation*} 
which achieves the proof of Theorem \ref{T-ASCVG}.
\demend

\vspace{-2ex}

\section{Proofs of the asymptotic normality results.}
\label{AppendixB}
\renewcommand{\thesection}{\Alph{section}}
\renewcommand{\theequation}{\thesection.\arabic{equation}}
\setcounter{section}{2}
\setcounter{equation}{0}

In order to prove Theorem \ref{T-CLT}, we shall make use of the central limit theorem for martingales given e.g. by Theorem 2.1.9 of \cite{duflo1997random}.
First of all, we focus our attention on convergence \eqref{CLTWJD} since
it is the easiest convergence to prove. \vspace{-1ex}\\

\noindent
{\bf Proof of convergence \eqref{CLTWJD}.}
It follows from \eqref{NEWDECOFCPN} that 
$$
\sqrt{nh_n^3}\bigl(\fcnp-f^\prime(x)\bigr) = 
\dfrac{\sqrt{nh_n^3}}{n} \left( M_n^{C}(x)+\dE[C_n(x)]+D_n(x)-n f^\prime(x)\right),
$$
which implies the martingale decomposition
\begin{equation}
\label{DEFOFFCPN_FPN}
\sqrt{nh_n^3}\bigl(\fcnp-f^\prime(x)\bigr) =\dfrac{1}{\sqrt{n^{1+3\alpha}}}\bigl(
\langle  e, \cM_n(x) \rangle + \widecheck{R}_n(x) \bigr)
\end{equation}
where 
\begin{equation*}
 e=
\begin{pmatrix}
\ 1 \ \\ \ 1 \ 
\end{pmatrix},
\hspace{1cm}
\cM_n(x)= 
\begin{pmatrix}
M_n^{C}(x)\\D_n(x)
\end{pmatrix},
\end{equation*}
and the remainder 
\begin{equation}
\label{DEFRESTFCPN}
\widecheck{R}_n(x)=\dE[C_n(x)]-nf^\prime(x)=\skn \bigl(\dE[c_k(x)]-f^\prime(x)\bigr).
\end{equation}
It follows from Taylor's formula that it exists $\theta_f \in ]0,1[$ such that, for any $x \in \dR$,
\begin{align*}
\dE [c_n(x)] & = \int_{\dR} f(x_n) v_n(x_n,x) dx_n =\dfrac{1}{h_n}\int_{\dR} f(x-h_ny)K^\prime(y) dy \\
& = f^\prime(x)+ \dfrac{h_n}{2} \int_{\dR} f^{\prime \prime}(x-h_ny\theta_f)y^2 K^\prime(y) dy,
\end{align*}
where $v_n$ is defined in (\ref{vn}). Since $f^{\prime \prime}$ is bounded, we have
\begin{equation}
\label{CONTRESTFCPN}
\sup_{x \in \dR}\bigl|\dE[c_n(x)]-f^\prime(x)\bigr| \leq M_f \tau^2 h_n
\end{equation}
where 
$$
M_f={\displaystyle \sup_{x \in \dR}} | f^{\prime \prime}(x) |
\hspace{1cm}\text{and} \hspace{1cm}
\tau^2= \frac{1}{2} \int_{\dR} y^2 \bigl| K^\prime(y) \bigr| dy.
$$
Hence, we deduce from \eqref{DEFRESTFCPN} and \eqref{CONTRESTFCPN} that
$$
\sup_{x \in \dR}\bigl|\widecheck{R}_n(x)\bigr| \leq  \tau^2 M_f  \sum_{k=1}^n h_k.
$$
However, it is easily seen that
$$
\sum_{k=1}^n h_k \leq \frac{1}{1-\alpha}n^{1-\alpha}.
$$
Therefore, as soon as $\alpha >1/5$, we obtain that
\begin{equation}
\sup_{x \in \dR} \bigl| \widecheck R_n(x) \bigr|=o(\sqrt{n^{1+3\alpha}}).
\label{REST_FCPN}
\end{equation}
Hereafter, the predictable quadratic variation (\cite{duflo1997random})
of the two-dimensional real martingale $(\cM_n(x))$ is given, for all $n \geq 1$, by the diagonal matrix
\begin{equation*}
\langle \cM(x) \rangle_n=
\begin{pmatrix}
\langle M^{C}(x) \rangle_n & 0 \\
0 & \langle D(x) \rangle_n
\end{pmatrix}.
\end{equation*}
Then, it follows from \eqref{CVGQVNNDN} that for any $x \in \dR$ such that $g(x)>0$,
\begin{equation}
\label{CVGQVMARTVECTNN}
\lim_{n\rightarrow \infty} \dfrac{1}{n^{1+3\alpha}}\langle \cM(x) \rangle_n=
\frac{\xi^2}{(1+3\alpha)g(x)}
\begin{pmatrix}
f^2(x) & 0\\
0 & \sigma^2
\end{pmatrix}.
\end{equation}
Furthermore, it is not hard to see that the martingale $(\cM_n(x))$ satisfies the Lindeberg condition.
As a matter of fact, we have assumed that the sequence $(\veps_n)$ has a finite
moment of order $p>2$. Let $a>0$ be such that $p=2(1+a)$. If we denote
$\Delta \cM_n(x)=\cM_n(x)-\cM_{n-1}(x)$, we have for all $n \geq 1$,
\begin{align}
\dE \bigl [ \|\Delta \cM_n(x)\|^{p} |\cF_{n-1} \bigr ]
&=\dE \bigl [ \bigl( \bigl(\Delta M_n^{C}(x)\bigr)^2+\bigl(\Delta D_n(x)\bigr)^2\bigr)^{1+a} |\cF_{n-1} \bigr ]\notag\\
&\leq  2^{a} \,\dE \bigl [  \bigl|\Delta M_n^{C}(x)\bigr|^{p}+\bigl| \Delta D_n(x)\bigr|^{p} \bigr) |\cF_{n-1} \bigr ].
\label{LIND_DELTN}
\end{align}
On the one hand,
\begin{align}
\dE\bigl [ \bigl|\Delta M_n^{C}(x)\bigr|^{p} |\cF_{n-1}\bigr ] &= \dE\bigl [ |c_n(x)-\dE [c_n(x)]|^{p} |\cF_{n-1}\bigr ]\notag\\
&\leq 2^{p-1} \bigl(\dE\bigl [  \bigl| c_n(x) \bigr|^{p} \bigr ] + \bigl| \dE[c_n(x)] \bigr|^{p} \bigr).
\label{LIND_DELT_martN}
\end{align}
However, as $f^\prime$ is bounded, it follows from \eqref{CONTRESTFCPN} that
$$
\sup_{x \in \dR} \bigl|\dE[c_n(x)]\bigr| \leq m_f+ M_f \tau^2 
$$
where $m_f={\displaystyle \sup_{x \in \dR}} | f^{\prime}(x) |$.
Consequently, it exists a positive constant $C_p$ such that 
\begin{equation}
\label{LIND_cN}
\sup_{x \in \dR} \bigl|\dE[c_n(x)]\bigr|^p \leq C_p. 
\end{equation}
Moreover, 
\begin{align*}
\dE\bigl [  \bigl| c_n(x) \bigr|^{p} \bigr ]
&= \int_{\dR}\dfrac{f(x_n)^{p}}{g(x_n)^{p-1}}\bigl| v_n(x_{n},x) \bigr|^{p}dx_n \\
&=\dfrac{1}{h_n^{2p-1}}\int_{\dR}\dfrac{f(x-h_n y)^{p}}{g(x-h_ny)^{p-1}}\bigl| K^{\prime}(y) \bigr|^{p}dy.
\end{align*}
Hence, for any $x \in \dR$ such that $g(x)>0$, it exist a positive constant $c_p$ such that 
\begin{equation}
\label{LIND_CN}
 \dE\bigl [ \bigl|c_n(x)\bigr|^{p} \bigr ] \leq \dfrac{c_p}{h_n^{2p-1}}.
\end{equation}
Therefore, we deduce from \eqref{LIND_DELT_martN} together with \eqref{LIND_cN} and \eqref{LIND_CN} that
for any $x \in \dR$ such that $g(x)>0$,
\begin{equation}
\label{LIND_majorN}
\dE\bigl [ \bigl|\Delta M_n^{C}(x)\bigr|^{p} |\cF_{n-1}\bigr ]
\leq 2^{p-1}\Bigl(
\dfrac{c_p}{h_n^{2p-1}}+C_p \Bigr).
\end{equation}
On the other hand, we have
$$
\dE\bigl [ \bigl|\Delta D_n(x)\bigr|^{p} |\cF_{n-1}\bigr ] = \dE\bigl [ |d_n(x)|^{p} |\cF_{n-1}\bigr ]
= \dE\bigl [ |\veps_n|^{p} |w_n(X_n,x)|^{p} |\cF_{n-1}\bigr ]
$$
where
$$w_n(X_n,x)= \dfrac{v_n(X_n,x)}{g(X_n)} =\dfrac{1}{h_n^2 g(X_n)} \KpXn.$$
We infer from assumption $(\mathcal{A}_3)$ that
\begin{equation}
\label{LIND_SEPDN}
\dE\bigl [ \bigl|\Delta D_n(x)\bigr|^{p} |\cF_{n-1}\bigr ] 
= \dE\bigl [ |\veps_n|^{p}] \dE\bigl [ |w_n(X_n,x)|^{p}].
\end{equation}
However, the sequence $(\veps_n)$ has a finite moment of order $p>2$ which means that 
it exists a positive constant $E_p$ such that $E_p= \dE\bigl [ |\veps_n|^{p}]$.
Moreover, as in the proof of \eqref{LIND_CN}, we obtain that for any $x \in \dR$ such that $g(x)>0$, 
it exist a positive constant $w_p$ such that 
\begin{equation}
\label{LIND_DN}
 \dE\bigl [ \bigl|w_n(X_n,x)\bigr|^{p} \bigr ] \leq \dfrac{w_p}{h_n^{2p-1}}.
\end{equation}
Hence, it follows from \eqref{LIND_SEPDN} and \eqref{LIND_DN} that
for any $x \in \dR$ such that $g(x)>0$,
\begin{equation}
\label{LIND_majorD}
\dE\bigl [ \bigl|\Delta D_n(x)\bigr|^{p} |\cF_{n-1}\bigr ]
\leq 
\dfrac{E_p w_p}{h_n^{2p-1}}.
\end{equation}
Consequently, we deduce from \eqref{LIND_DELTN} together with
\eqref{LIND_majorN} and \eqref{LIND_majorD} that for any $x \in \dR$ such that $g(x)>0$,
one can find a positive constant $M_p$ such that, for all $n \geq 1$,
\begin{equation}
\label{LIND_majorfinN}
\dE \bigl [ \|\Delta \cM_n(x)\|^{p} |\cF_{n-1} \bigr ] \leq \frac{M_p}{h_n^{2p-1}}
\hspace{1cm}\text{a.s.}
\end{equation}
We recall that $p=2(1+a)$. For any $\veps>0$, if
$\cA_k(x,\veps,n)=\bigl\{ \|\Delta \cM_k(x)\| \geq \veps \sqrt{n^{1+3 \alpha}}\bigr\}$, 
we have from \eqref{LIND_majorfinN},
\begin{align*}
\dfrac{1}{n^{1+3\alpha}} \skn \dE \bigl [ \|\Delta \cM_k(x)\|^{2}
 \rI_{\cA_k(x,\veps, n)}
|\cF_{k-1} \bigr ]
& \leq \frac{1}{\veps^{p-2} n^{b}} \skn \dE \bigl [ \|\Delta \cM_k(x)\|^{p} |\cF_{k-1} \bigr ]  \\
& \leq \frac{M_p}{\veps^{p-2} n^{b}} \skn \frac{1}{h_k^{2p-1}}  \hspace{1cm}\text{a.s.} \\
& \leq \frac{M_p n^c}{\veps^{p-2}} \hspace{1cm}\text{a.s.}
\end{align*}
where
$b=(a+1)(1+3\alpha)$ and $c=a(\alpha -1)$. Since $c<0$, the Lindeberg condition is clearly satisfied.
Finally, we can conclude from the central limit theorem for martingales (\cite{duflo1997random}) that
for any $x \in \dR$ such that $g(x)>0$,
\begin{equation}
\dfrac{1}{\sqrt{n^{1+3\alpha}}}\cM_n(x) \liml \cN \bigl(0,\Gamma(x) \bigl),
\label{CLTNN}
\end{equation} 
where
$$
\Gamma(x)=\frac{\xi^2}{(1+3\alpha)g(x)}
\begin{pmatrix}
f^2(x) & 0\\
0 & \sigma^2
\end{pmatrix}.
$$
Hence, \eqref{DEFOFFCPN_FPN} together with \eqref{REST_FCPN} and \eqref{CLTNN} immediately leads to
$$
\vspace{-2ex}
\sqrt{nh_n^3}\bigl(\fcnp-f^\prime(x)\bigr)  \liml \cN \Bigl(0,\dfrac{\xi^2}{1+3\alpha}\dfrac{f^2(x)+\sigma^2}{g(x)}\Bigr).
$$
\demend
\noindent
{\bf Proof of convergence \eqref{CLTJD}.} It follows from \eqref{DEFHGHAT} that
\begin{equation}
\label{DECHN}
n \hn = P_n(x)+Q_n(x)=M_n^{P}(x) + \dE[P_n(x)] + Q_n(x)
\end{equation}
where $M_n^{P}(x)=P_n(x)-\dE[P_n(x)]$,
 \begin{align*}
P_n(x) &= \skn p_k(x)= \skn f(X_k)u_k(X_k,x), \\
Q_n(x) &= \skn q_k(x)= \skn \veps_k u_k(X_k,x) 
\end{align*}
with
$$u_n(X_n,x)= \dfrac{1}{h_n} \KXn.$$
Hence, for any $x \in \dR$ such that $g(x)>0$, we obtain from \eqref{JD}, \eqref{NEWDECOHN} and \eqref{DECHN} 
\begin{align*}
n(\ftnp - f^{\prime}(x))&=\dfrac{1}{g(x)}\bigl(M_n^{\!A}(x)+\dE[A_n(x)]+B_n(x)\bigr) \\
&\hspace{1cm}-\dfrac{g^{\prime}(x)}{g^2(x)}\bigl(M_n^{\!P}(x)+\dE[P_n(x)]+Q_n(x)\bigr) - nf^{\prime}(x)
\end{align*}
which leads to the martingale decomposition
\begin{equation}
\label{DEFOFFTPN_FPN}
\sqrt{nh_n^3}\bigl(\ftnp-f^\prime(x)\bigr) =\dfrac{1}{\sqrt{n^{1+3\alpha}}}\bigl(
\langle \widetilde e(x), \cM_n(x) \rangle +\widetilde R_n(x) \bigr)
\end{equation}
where 
\begin{equation*}
\widetilde e(x)=
\frac{1}{g^2(x)}
\begin{pmatrix}
\ g(x) \ \\ \ g(x) \ \\ -g^{\prime}(x) \ \\ -g^{\prime}(x) \
\end{pmatrix},
\hspace{1cm}
\cM_n(x)= 
\begin{pmatrix}
M_n^{\!A}(x)\\B_n(x)\\M^{\!P}_n(x)\\Q_n(x)
\end{pmatrix},
\end{equation*}
and the remainder 
$$\widetilde R_n(x)=\dfrac{1}{g(x)}\dE[A_n(x)]-\dfrac{g^\prime(x)}{g^2(x)}\dE[P_n(x)]-nf^\prime(x).$$
We saw in  \eqref{MEANAN} that $\dE[a_n(x)]=\bigl(f(x)g(x)\bigr)^\prime+R_n(x)$ 
where $\sup_{x \in \dR}\bigl| R_n(x)\bigr|=O(h_n)$. By the same token, $\dE[P_n(x)]=f(x)g(x)+\zeta_n(x)$ 
where $\sup_{x \in \dR} \bigl| \zeta_n(x)\bigr|=O(h_n^2)$. 
Therefore, as soon as $\alpha >1/5$, we obtain that
\begin{equation}
\sup_{x \in \dR} \bigl| \widetilde R_n(x) \bigr|=O \left (\skn h_k \right )=o(\sqrt{n^{1+3\alpha}}).
\label{REST_FTPN}
\end{equation}
Furthermore, as in the proof of \eqref{CVGQVMNBN} and \eqref{CVGQVMARTVECTNN}, the predictable quadratic variation  
of the four-dimensional real martingale $(\cM_n(x))$
%
%
satisfies, for any $x \in \dR$, 
\begin{equation}
\label{CVGQVMARTVECTMM}
\lim_{n\rightarrow \infty} \dfrac{1}{n^{1+3\alpha}}\langle \cM(x) \rangle_n= \Gamma(x)
\end{equation}
where $\Gamma(x)$ is the four-dimensional covariance matrix given by
$$
\Gamma(x)=\frac{\xi^2 g(x)}{(1+3\alpha)}
\begin{pmatrix}
f^2(x) & 0 & \ 0 & \ 0 \ \\
0 & \sigma^2 & \ 0 & \ 0 \ \\
0 & 0 & \ 0 & \ 0 \ \\
0 & 0 & \ 0 & \ 0 \
\end{pmatrix}.
$$
Moreover, via the same lines as in the proof of \eqref{LIND_majorfinN}, we can also show 
that $(\cM_n(x))$ satisfies the Lindeberg condition. Finally, we find from the central limit theorem 
for martingales (\cite{duflo1997random}) that for any $x \in \dR$,
\begin{equation*}
\dfrac{1}{\sqrt{n^{1+3\alpha}}}\cM_n(x) \liml \cN \bigl(0,\Gamma(x) \bigl),
\end{equation*} 
which implies, from \eqref{DEFOFFTPN_FPN} and \eqref{REST_FTPN}, that for any $x \in \dR$ such that $g(x)>0$,
$$
\vspace{-2ex}
\sqrt{nh_n^3}\bigl(\ftnp-f^\prime(x)\bigr)  \liml \cN \left (0,\dfrac{\xi^2}{1+3\alpha}\dfrac{f^2(x)+\sigma^2}{g(x)}\right ).
$$
\demend
\noindent
{\bf Proof of convergence \eqref{CLTNWD}.}
First of all, for any $x \in \dR$, denote $h(x)=f(x)g(x)$. It follows from 
\eqref{DEFFHAT} and \eqref{DEFHGHAT} that for any $x \in \dR$ such that $g(x)>0$,
\begin{align}
\fn-f(x)&=\dfrac{\hn}{\gn}-\dfrac{h(x)}{g(x)}=\dfrac{1}{ g(x) \gn}\Bigl( \hn g(x) -h(x) \gn\Bigr) \notag \\
&=\dfrac{1}{g(x)\gn }\Bigl(g(x)\Bigl(\hn-h(x)\Bigr)-h(x)\Bigl(\gn-g(x)\Bigr)\Bigr) \notag \\
&=\dfrac{1}{\gn }\Bigl(\hn-h(x)\Bigr)-\dfrac{h(x)}{ g(x)\gn }\Bigl(\gn-g(x)\Bigr).
\label{DECCLTFHAT}
\end{align}
By the same token, we obtain from \eqref{NWD} together with tedious but straightforward calculation that
for any $x \in \dR$ such that $g(x)>0$,
\begin{align}
\fnp - f^{\prime}(x)&=\Bigl( \dfrac{\hnp}{\gn}-\dfrac{\hn \gnp}{\gnc}\Bigr)-\Bigl(\dfrac{h'(x)}{g(x)}-\dfrac{h(x)g'(x)}{g^2(x)}\Bigr) \notag\\
&=\dfrac{1}{\gn}\Bigl( \hnp-h'(x) \Bigr)-\dfrac{f^{\prime}(x)}{\gn}\Bigl( \gn-g(x) \Bigr)\notag \\
&\hspace{1cm}- \dfrac{\hn}{\gnc}\Bigl(\gnp -g^{\prime}(x)\Bigr)- \dfrac{g^{\prime}(x)}{\gn}\Bigl(\fn -f(x)\Bigr).
\label{DECCLTFHATP}
\end{align}
Hence, we obtain from \eqref{DECCLTFHAT} and \eqref{DECCLTFHATP} that
for any $x \in \dR$ such that $g(x)>0$,
\begin{align}
\fnp - f^{\prime}(x)&= \dfrac{1}{\gn}\Bigl( \hnp-h'(x) \Bigr) + \dfrac{\widehat{\ell}_n(x)}{\gnc}\Bigl( \gn-g(x) \Bigr)\notag \\
&\hspace{1cm} -\dfrac{\hn}{\gnc}\Bigl(\gnp -g'(x)\Bigr) - \dfrac{g^{\prime}(x)}{\gnc}\Bigl(\hn -h(x)\Bigr)
\label{DECCLTFHATPFIN}
\end{align}
where $\widehat{\ell}_n(x)=f(x)g^{\prime}(x)-f^{\prime}(x)\gn$.
Therefore, we deduce from identity \eqref{DECCLTFHATPFIN} the martingale decomposition
\begin{equation}
\label{DECMARTCLT_FPN}
\sqrt{nh_n^3}\bigl(\fnp-f^\prime(x)\bigr) =\dfrac{1}{\sqrt{n^{1+3\alpha}}}\bigl(
\langle \widehat e_n(x), \cM_n(x) \rangle +\widehat R_n(x) \bigr)
\end{equation}
with
\begin{equation*}
\widehat e_n(x)=
\frac{1}{\gnc}
\begin{pmatrix}
\ \gn \ \\ \ \gn \ \\ - \hn \ \\ \widehat{\ell}_n(x) \\ -g^{\prime}(x) \ \\ -g^{\prime}(x) \
\end{pmatrix},
\hspace{1cm}
\cM_n(x)= 
\begin{pmatrix}
M_n^{\!A}(x)\\B_n(x)\\ M^{V}_n(x)\\ M^{U}_n(x)\\M^{\!P}_n(x)\\Q_n(x)
\end{pmatrix},
\end{equation*}
where the martingale difference sequences $(M_n^{\!A}(x))$, $(B_n(x))$ and $(M_n^{\!P}(x))$, $(Q_n(x))$
were previously defined in \eqref{NEWDECOHN} and \eqref{DECHN}, while
the martingale difference sequences $(M_n^{U}(x))$ and $(M_n^{V}(x))$
are given by $M_n^{U}(x)=U_n(x)-\dE[U_n(x)]$ and $M_n^{V}(x)=V_n(x)-\dE[V_n(x)]$ with
\begin{align*}
U_n(x) &= \skn u_k(X_k,x)=\skn\dfrac{1}{h_n} \KXn, \\
V_n(x) &= \skn v_k(X_k,x)=\skn\dfrac{1}{h_n^2} \KpXn.
\end{align*}
It is not hard to see that the remainder $\widehat R_n(x)$, which can be explicitely calculated, plays a negligible role
since, as soon as $\alpha>1/5$,
\begin{equation}
\sup_{x \in \dR} \bigl| \widehat R_n(x) \bigr|=o(\sqrt{n^{1+3\alpha}}).
\label{REST_FHATN}
\end{equation}
It remains to establish the asymptotic behavior of the six-dimensional real martingale $(\cM_n(x))$. As in the proof of
\eqref{CVGQVMARTVECTNN} and \eqref{CVGQVMARTVECTMM}, we can show that for any $x \in \dR$, 
\begin{equation}
\label{CVGQVMARTVECTCLT}
\lim_{n\rightarrow \infty} \dfrac{1}{n^{1+3\alpha}}\langle \cM(x) \rangle_n= \Gamma(x)
\end{equation}
where $\Gamma(x)$ is the six-dimensional covariance matrix given by
$$
\Gamma(x)=
\frac{\xi^2 g(x)}{(1+3\alpha)}
\begin{pmatrix}
f^2(x) & 0 & f(x) &\ 0 & \ 0 & \ 0\ \\
0 & \sigma^2 & \ 0 & \ 0 & \ 0 & \ 0 \ \\
f(x) & 0 & 1 &\ 0 & \ 0 & \ 0 \ \\
0 & 0 & \ 0 & \ 0 & \ 0 & \ 0 \ \\
0 & 0 & \ 0 & \ 0 & \ 0 & \ 0 \ \\
0 & 0 & \ 0 & \ 0 & \ 0 & \ 0 \
\end{pmatrix}.
$$
Moreover, via the same lines as in the proof of \eqref{LIND_majorfinN}, $(\cM_n(x))$ satisfies the Lindeberg condition.
Hence, we obtain from the central limit theorem 
for martingales (\cite{duflo1997random}) that for any $x \in \dR$,
\begin{equation}
\dfrac{1}{\sqrt{n^{1+3\alpha}}}\cM_n(x) \liml \cN \bigl(0,\Gamma(x) \bigl).
\label{CLTMM}
\end{equation}
Furthermore, it follows from Lemma \ref{L-ASCVG} that for any $x \in \dR$ such that $g(x)>0$,
$\widehat e_n(x)$ converges a.s. to $e(x)$ where
\begin{equation}
\label{ASCVGEHATN}
e(x)=
\frac{1}{g^2(x)}
\begin{pmatrix}
\ g(x) \ \\ \ g(x) \ \\ -h(x) \ \\ \ \ell(x) \ \\ -g^{\prime}(x) \ \\ -g^{\prime}(x) \
\end{pmatrix}
\end{equation}
with
$\ell(x)=f(x)g^{\prime}(x)-f^{\prime}(x)g(x)$. Finally, we deduce from
\eqref{DECMARTCLT_FPN}, \eqref{REST_FHATN}, \eqref{CLTMM} and \eqref{ASCVGEHATN}
together with Slutsky's lemma that for any $x \in \dR$ such that $g(x)>0$,
\begin{equation}
\label{CLTFHATNPFINAL}
\sqrt{nh_n^3}\bigl(\fnp-f^\prime(x)\bigr)  \liml \cN \bigl(0,\sigma^2(x)\bigr)
\end{equation}
where $\sigma^2(x)= \langle e(x), \Gamma(x) e(x) \rangle$. However, as
$h(x)=f(x)g(x)$,
it is not hard to see that
\begin{align*}
\sigma^2(x)&=
\dfrac{\xi^2}{(1+3\alpha)g^{3}(x)} 
\begin{pmatrix}
\ g(x) \ \\ \ g(x) \ \\  -f(x)g(x) \
\end{pmatrix}^T
\begin{pmatrix}
f^2(x) & 0 & f(x) \\
0 & \sigma^2 & \ 0 \ \\
f(x) & 0 & 1 
\end{pmatrix} 
\begin{pmatrix}
\ g(x) \ \\ \ g(x) \ \\ -f(x)g(x) \
\end{pmatrix} \\
&=
 \dfrac{\xi^2}{(1+3\alpha)g(x)} 
\begin{pmatrix}
\ 1 \ \\ \ 1 \ \\  -f(x) \
\end{pmatrix}^T
\begin{pmatrix}
f^2(x) & 0 & f(x) \\
0 & \sigma^2 & \ 0 \ \\
f(x) & 0 & 1 
\end{pmatrix} 
\begin{pmatrix}
\ 1 \ \\ \ 1 \ \\ -f(x) \
\end{pmatrix} \\
&=\dfrac{\xi^2 \ \sigma^2}{(1+3\alpha)g(x)},
\end{align*}
which completes the proof of Theorem \ref{T-CLT}.
\demend

\noindent
{\bf Acknowledgements.}
This work was supported by the ASPEET Grant from the Universit\'e Bretagne Sud and the Centre National de la Recherche Scientifique. 
We would like to thanks J. C.  Massabuau, D. Tran, P. Ciret and M. Sow, from Bordeaux University, for many 
fruitful discussions.

\bibliographystyle{agsm}

\bibliography{MS_Bercu_Capderou_Durrieu}

\end{document}